\documentclass[10pt]{amsart}
\usepackage{amssymb}
\usepackage{bm}
\usepackage{graphicx}
\usepackage[centertags]{amsmath}
\usepackage{amsfonts}
\usepackage{amsthm}
\usepackage{graphicx}
\linespread{1.18}


\newtheorem{thm}{Theorem}
\newtheorem{cor}[thm]{Corollary}
\newtheorem{lem}[thm]{Lemma}

\newtheorem{prop}[thm]{Proposition}

\newtheorem{fact}[thm]{Fact}

\newtheorem{defn}[thm]{Definition}
\theoremstyle{definition}


\newcommand{\nn}{\mathbb{N}}
\newcommand{\ee}{\varepsilon}
\newcommand{\con}{\smallfrown}
\newcommand{\meg}{\geqslant}
\newcommand{\mik}{\leqslant}

\newcommand{\supp}{\mathrm{supp}}


\begin{document}

\title[Finite version of Gowers' $c_0$ theorem]{Primitive recursive bounds for the finite version of Gowers' $c_0$ theorem}

\author{Konstantinos Tyros}

\address{Mathematics Institute, University of Warwick, Coventry, CV4 7AL, UK}
\email{k.tyros@warwick.ac.uk}

\thanks{2000 \textit{Mathematics Subject Classification}: 05D10.}
\thanks{\textit{Key words}: Ramsey Theory, Gowers $c_0$ theorem.}
\thanks{Supported by ERC grant 306493}
\maketitle


\begin{abstract}
We provide primitive recursive bounds for the finite version of Gowers' $c_0$ theorem for both
the positive and the general case. We also provide multidimensional versions of these results.
\end{abstract}


\section{Introduction}
In 1992 W. T. Gowers (see \cite{G1}) obtained a stability result for real valued Lipschitz functions defined on the unit sphere of $c_0$. This result is actually a consequence of a deep infinite dimensional Ramsey type result (see \cite{BL,G2,T} as well as \cite{K} for an elegant proof).
Our goal in this paper is to provide primitive recursive bounds for the finite version of this
Ramsey type result.

To state our results we need first to introduce some pieces of notation. By $\nn$ we denote the set of all non-negative integers.
Let $n,k$ be positive integers. By $X_k(n)$ we denote the set of all functions having domain the
set $\{0,...,n-1\}$, range the set $\{0,...,k\}$ and achieving the value $k$, i.e.,
\[X_k(n)=\big\{f:\{0,...,n-1\}\to\{0,...,k\}\;\text{such that}\;f(i)=k
\;\text{for some}\;0\mik i< n\big\}.\]
By $X_{[k]}(n)$ we denote the set of all functions having domain the
set $\{0,...,n-1\}$ and range the set $\{0,...,k\}$.
The set $X_k(n)$ discretizes  the positive cone of the unit sphere of $\ell_\infty^n$, while $X_{[k]}(n)$ discretizes the
positive cone of the unit ball of $\ell_\infty^n$.
The scalar multiplication is captured by the following map.
We define $T:X_{[k]}(n)\to X_{[k]}(n)$ as follows. For every $f\in X_{[k]}(n)$
and $i\in\{0,...,n-1\}$ we set
\[T(f)(i)=\max(0,f(i)-1).\]
For every $f$ in $X_{[k]}(n)$, by $\supp(f)$, we denote the set of all $i$ in $\{0,...,n-1\}$ such that $f(i)\neq0$. A sequence $\mathbf{F}=(f_i)_{i=0}^{m-1}$ in $X_{[k]}(n)$ is called block of length $m$ if $(\supp(f_i))_{i=0}^{m-1}$ forms a
block sequence of nonempty finite subsets of $\nn$, that is, $\max\supp(f_i)<\min\supp(f_{i+1})$ for all $0\mik i<m-1$. For a block sequence $\mathbf{F}=(f_i)_{i=0}^{m-1}$ in $X_{[k]}(n)$, we define the positive subspace generated by $\mathbf{F}$ to be
\[\begin{split}\langle \mathbf{F} \rangle_k=\Big\{\sum_{i=1}^{\ell}T^{\ee_i}(f_{j_i}):\;&
\ell\mik m,\; 0\mik j_1<...<j_\ell<m,\; \\
&\ee_1,...,\ee_\ell\in\{0,...,k-1\}\;\text{and}\;\min_{1\mik i\mik\ell}\ee_i=0\Big\}.
\end{split}\]
The first result in this paper is the following.

\begin{thm}
  \label{positive_Gowers_fin}
  For every triple of positive integers $k,m,r$, there exists a positive integer $n_0$ satisfying the
  following property. For every integer $n\meg n_0$ and every coloring of the set $X_k(n)$ with $r$ colors, there exists a block sequence $\mathbf{F}$ in $X_k(n)$ of length $m$ such that the set $\langle \mathbf{F} \rangle_k$ is monochromatic.
  We denote the least $n_0$ satisfying the above property by $ \mathrm{G}(k,m,r)$.

  Moreover, the numbers $\mathrm{G}(k,m,r)$ are upper bounded by a primitive recursive function belonging to the class $\mathcal{E}^7$ of Grzegorczyk's hierarchy.
\end{thm}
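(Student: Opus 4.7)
The plan is to prove the theorem by induction on $k$, using a primitive recursive form of a Hales-Jewett or Graham-Rothschild type theorem as the workhorse. The base case $k=1$ follows directly from the classical Hales-Jewett theorem over the alphabet $\{0,1\}$: the operator $T$ annihilates every element of $X_1(n)$, so $\ee_i = 0$ is forced in the definition of $\langle\mathbf{F}\rangle_1$, and $\langle\mathbf{F}\rangle_1$ collapses to the set of indicator sums of nonempty subsequences of $\mathbf{F}$. By Shelah's proof of Hales-Jewett, the corresponding numbers lie in a low level of Grzegorczyk's hierarchy.

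For the inductive step $k-1 \Rightarrow k$, I would first apply a $\gr$ or $\mdhj$ extraction (with primitive recursive bounds) to find a very long block sequence $\mathbf{F}_0$ in $X_k(n)$ on which the coloring is stabilized for those elements of $\langle\mathbf{F}_0\rangle_k$ whose exponent tuples $(\ee_i)$ satisfy $\min_i\ee_i \meg 1$. Such elements lie in the image of $T$ and can be rewritten as $T$ applied to elements of $\langle T(\mathbf{F}_0)\rangle_{k-1}$; the inductive hypothesis then refines $\mathbf{F}_0$ further to monochromatize this \emph{lower-floor} contribution. A second Hales-Jewett type extraction, this time applied to the remaining \emph{top-floor} elements (those with some $\ee_i = 0$, parametrized by variable-word data over an alphabet of size comparable to $k$), completes the inductive step.

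The main technical obstacle is precisely the constraint $\min_i\ee_i = 0$: it forbids purely lower-floor combinations and so couples the top floor inextricably with the rest, so that the induction on $k$ cannot naively separate the two contributions. Overcoming this requires a careful combinatorial encoding of top-floor data as a combinatorial subspace of a larger Hales-Jewett space, so that one Ramsey extraction handles both the top floor and its compatibility with the inductively obtained structure simultaneously. Finally, to place $\mathrm{G}(k,m,r)$ in $\mathcal{E}^7$, the resulting recursion is inspected: each inductive step composes a Hales-Jewett-type number (sitting in a fixed low level of the hierarchy) with $\mathrm{G}(k-1,\cdot,\cdot)$ and an inner induction on $m$; since each composition together with bounded recursion costs at most one level, the outer induction on $k$ and the inner one on $m$ between them keep $\mathrm{G}$ within $\mathcal{E}^7$ via the standard closure of that class under composition and bounded recursion.
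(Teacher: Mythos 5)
Your base case is essentially right in spirit: for $k=1$ the statement collapses to a finite unions theorem, which the paper obtains as the $d=1$ case of Milliken--Taylor (i.e.\ finite Hindman), rather than via Hales--Jewett over $\{0,1\}$; either way this step is fine and not where the difficulty lies.

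The inductive step, however, has a genuine gap. Your proposed decomposition into a ``lower floor'' (exponent tuples with $\min_i\ee_i\meg1$) and a ``top floor'' (some $\ee_i=0$) does not cleanly apply to the object you need to monochromatize, because every element of $\langle\mathbf{F}_0\rangle_k$ has $\min_i\ee_i=0$ by definition; the lower-floor combinations you want to stabilize first are not elements of $X_k(n)$ at all (they land in $X_{k-\min\ee_i}(n)$), so the given coloring simply does not see them. You correctly flag the constraint $\min_i\ee_i=0$ as the main obstacle, but the proposed resolution --- a ``careful combinatorial encoding of top-floor data as a combinatorial subspace of a larger Hales--Jewett space'' --- is not a concrete argument and, more to the point, omits the specific mechanism the paper actually needs. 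The paper's resolution is the notion of \emph{insensitivity}: using Milliken--Taylor applied to \emph{types} (the pattern of nonzero values of a function, read without repetition), one passes to a block sequence $\mathbf{F}'$ over which the coloring $c$ satisfies $c(f)=c(f+T^{(k-1)}(f'))$ for disjointly supported $f,f'\in\langle\mathbf{F}'\rangle_k$. With this in hand, the reduction to $k-1$ is done in one shot via the explicit map $Q:X_{[k-1]}(M)\to\langle\mathbf{F}'\rangle_{[k]}$, $Q(g)=\sum_{i\in\supp(g)}T^{(k-1-g(i))}(f'_i)$: the crucial identity $T(Q(g))=Q(T(g))+\sum_{i\in s}T^{(k-1)}(f'_i)$ shows that $Q$ intertwines the two $T$-actions only up to a ``noise'' term of exactly the form insensitivity erases, so that a $\tilde c$-monochromatic subspace for $g\mapsto c(Q(g))$ on $X_{k-1}(M)$ pushes forward to a $c$-monochromatic subspace of $X_k(n)$. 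Your sketch has neither the type/insensitivity mechanism nor a replacement for the $Q$-intertwining identity, and without one of these the top/bottom coupling you yourself identify is left unresolved. The bound bookkeeping you describe at the end is plausible in outline, but it presupposes a working recursion of the form $\mathrm{G}(k,m,r)\mik\mathrm{MT}(\ldots,\mathrm{G}(k-1,m,r),\ldots)$ that your argument does not actually establish.
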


For a detailed exposition on the Grzegorczyk's classes we
 refer the reader to \cite{R}. A similar to Theorem \ref{positive_Gowers_fin} result (not obtaining, however, primitive recursive bounds)
have been recently proved in \cite{O}. The proof of Theorem \ref{positive_Gowers_fin}
is inspired by Shelah's proof for the Graham--Rothschild theorem (see \cite[Theorem 2.2]{Sh}),
though there are some novel arguments that do not appear in Shelah' proof.
Specifically, given a coloring of the set $X_k(n)$, the goal is, by passing to a subspace, to canonize this coloring and make it
insensitive in the following sense. Every two functions of the form $f$ and $f+T^{(k-1)}(f')$ have the same color, where $f$ and $f'$ are disjointly supported.
In order to achieve this insensitivity of a given coloring, we consider the type of a function, a notion
introduced in Section \ref{section_typ_ins}, that takes into account the nonzero values of a function with no repetition. Using the finite version of the Milliken--Taylor theorem, we first make the coloring to depend only on the type of a function. An appropriate choice of functions, that eliminates several types, generates a subspace on which the given coloring is insensitive.

Our second result involves functions taking also negative values. To state it, we need to introduce some additional notation.
Let $n,k$ be positive integers. By $X_{\pm k}(n)$ we denote the set of all functions having domain the
set $\{0,...,n-1\}$, range the set $\{-k,...,k\}$ and achieving the value $k$ or $-k$, i.e.,
\[X_{\pm k}(n)=\big\{f:\{0,...,n-1\}\to\{-k,...,k\}\;\text{such that}\;|f(i)|=k
\;\text{for some}\;0\mik i< n\big\}.\]
By $X_{[\pm k]}(n)$ we denote the set of all functions having domain the
set $\{0,...,n-1\}$ and range the set $\{-k,...,k\}$. Moreover, we extend the map $T:X_{[\pm k]}(n)\to X_{[\pm k]}(n)$ as follows. We set
\[
T(f)(i)=\left\{ \begin{array} {l} f(i)-1,\;\;f(i)>0\\
                               0,\;\;\;\;\;\;\;\;\;\;\;\;\;f(i)=0\\
                               f(i)+1, \;\;f(i)<0     \end{array}  \right.
\]
for all $f$ in $X_{[\pm k]}(n)$ and $i$ in $\{0,...,n-1\}$.
For a block sequence $\mathbf{F}=(f_i)_{i=0}^{m-1}$ in $X_{\pm k}(n)$, i.e., $(\supp(f_i))_{i=0}^{m-1}$
is a block sequence, we extend the notion of the positive subspace. In particular, we define the subspace generated by
$\mathbf{F}=(f_i)_{i=0}^{m-1}$ to be
\[\begin{split}\langle \mathbf{F} \rangle_{\pm k}=\Big\{\sum_{i=1}^{\ell}\pm T^{\ee_i}(f_{j_i}):\;&
\ell\mik m,\; 0\mik j_1<...<j_\ell<m,\; \\
&\ee_1,...,\ee_\ell\in\{0,...,k-1\}\;\text{and}\;\min_{1\mik i\mik\ell}\ee_i=0\Big\}.
\end{split}\]
On $X_{\pm k}(n)$ we consider the supremum metric, denoted by $\rho_\infty$, and defined as usual
\[\rho_\infty(f,g)=\max_{0\mik i<n}|f(i)-g(i)|\]
for all $f,g$ in $X_{\pm k}(n)$.
Given a finite coloring  $c:X_{\pm k}(n)\to\{1,...,r\}$, we say that a subset
$A$ of $X_{\pm k}(n)$ is \textit{approximately monochromatic} if there exists some $i_0$ in $\{1,...,r\}$
such that for every $f$ in $A$ there exists an $f'$ in $X_{\pm k}(n)$ with $c(f')=i_0$ and $\rho_\infty(f,f')\mik1$.

\begin{thm}
  \label{full_Gowers_fin}
  For every triple of positive integers $k,m,r$, there exists a positive integer $n_0$ satisfying the
  following property. For every integer $n\meg n_0$ and every coloring of the set $X_{\pm k}(n)$ with $r$ colors, there exists a block sequence $\mathbf{F}$ in $X_{\pm k}(n)$ of length $m$ such that the set $\langle \mathbf{F} \rangle_{\pm k}$ is approximately monochromatic.
  We denote the least $n_0$ satisfying the above property by $\mathrm{G}_\pm(k,m,r)$.

  Moreover, the numbers $\mathrm{G}_\pm(k,m,r)$ are upper bounded by a primitive recursive function belonging to the class $\mathcal{E}^7$ of Grzegorczyk's hierarchy.
\end{thm}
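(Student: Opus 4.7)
My plan is to deduce Theorem \ref{full_Gowers_fin} from Theorem \ref{positive_Gowers_fin} by enriching the coloring to record sign information, and then using approximate monochromaticity to absorb the top $T$-level. Since $X_k(n) \subseteq X_{\pm k}(n)$, it is enough to seek a \emph{positive} block sequence $\mathbf{F} = (f_j) \in X_k(n)^m$, for then $\langle \mathbf{F} \rangle_{\pm k}$ is already a signed subspace in $X_{\pm k}(n)$ of the required type, and the bounds from this restricted search upper-bound $\mathrm{G}_\pm(k,m,r)$.

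First I apply Theorem \ref{positive_Gowers_fin} to the $r^2$-coloring $\chi(g) = (c(g), c(-g))$ on $X_k(N)$ with $N = \mathrm{G}(k, M, r^2)$, producing a positive block sequence $\mathbf{F}_0 \in X_k(N)^M$ on whose positive subspace $\langle \mathbf{F}_0 \rangle_k$ the pair $(c(h), c(-h))$ is constantly $(c_+, c_-)$. To handle mixed-sign partial sums I apply a signed finite-unions theorem (with primitive recursive bounds) to the $r$-coloring $\delta \mapsto c(\sum_j \delta_j f_{0,j})$ on $\{-1, 0, +1\}^M \setminus \{0^M\}$, extracting a sub-block-sequence $\mathbf{F} = (F_j)_{j=0}^{m-1}$ of $\mathbf{F}_0$ (each $F_j$ a partial sum over a block $A_j \subseteq \{0,\dots,M-1\}$ of the $f_{0,j'}$'s, with $A_0 < A_1 < \dots < A_{m-1}$, hence $F_j \in X_k(N)$) such that every signed partial sum $\sum_{j \in S} \delta_j F_j$ (for $\emptyset \neq S \subseteq \{0,\dots,m-1\}$ and $\delta \in \{\pm 1\}^S$) has a common color $c_*$; comparing $\delta \equiv (+,\dots,+)$ and $\delta \equiv (-,\dots,-)$ then forces $c_+ = c_- = c_*$.

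It remains to control elements of $\langle \mathbf{F} \rangle_{\pm k}$ involving Tetris iterates. For $f = \sum_{j \in S} \delta_j T^{\ee_j}(F_j)$ with $\min_j \ee_j = 0$, the terms with $\ee_j = k-1$ have $L^\infty$-norm at most $1$, so removing them yields $f'$ with $\rho_\infty(f, f') \mik 1$ and all surviving levels $\mik k-2$ (the constraint $\min \ee_j = 0$ is preserved since $k \meg 2$ and some $\ee_j = 0$ block survives). For $k = 2$ this $f'$ is a signed partial sum of the $F_j$'s, hence $c(f') = c_*$, giving approximate monochromaticity of $\langle \mathbf{F} \rangle_{\pm k}$ with color $c_*$. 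For $k \meg 3$ there remain intermediate $T$-levels $\ee_j \in \{1,\dots,k-2\}$, and the main obstacle is to extend the enrichment from step one so that Theorem \ref{positive_Gowers_fin} simultaneously controls the colors of signed sums at every admissible level. I would handle this by iterating the two-layer argument with a richer alphabet that tracks the signed Tetris truncations at all levels in $\{0,\dots,k-2\}$; since both the alphabet size and the number of iterations remain primitive recursive in $k$, $m$, $r$, the resulting bound on $\mathrm{G}_\pm$ stays in the same Grzegorczyk class $\mathcal{E}^7$ as the bound provided by Theorem \ref{positive_Gowers_fin}.
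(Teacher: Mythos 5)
Your plan hinges on an intermediate step that cannot succeed. In the second stage you ask for a ``signed finite-unions theorem'' producing a sub-block-sequence $\mathbf{F}=(F_j)$ of $\mathbf{F}_0$ such that \emph{every} signed partial sum $\sum_{j\in S}\delta_j F_j$ (all $\emptyset\neq S$, all $\delta\in\{\pm1\}^S$) has a single common color $c_*$. No such theorem exists, and the obstruction is exactly the one in the paper's concluding Proposition: color each function by $(|\mathrm{tp}(f)|\bmod K)+1$, i.e.\ by the number of sign-changes in its type. After the canonicalization of Lemma \ref{insensitivity1} your $F_j$'s have ``pyramid'' types starting and ending at $1$, so $|\mathrm{tp}(F_1+F_2)|=|\mathrm{tp}(F_1)|+|\mathrm{tp}(F_2)|-1$ while $|\mathrm{tp}(F_1-F_2)|=|\mathrm{tp}(F_1)|+|\mathrm{tp}(F_2)|$; flipping signs at various positions hits all residues mod $K$, as the alternating sums $h_i$ in that Proposition show. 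So the coloring by number of jumps defeats exact monochromaticity of signed sums on any block subsequence, and $c_*$ cannot exist. This also means the approximation allowance in the theorem is not, as you treat it, a device to absorb the $\ee_j=k-1$ terms after everything else has been made exactly monochromatic; the approximation is \emph{required already at level $\ee=0$} precisely to deal with signs.

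The paper sidesteps this by a genuinely different mechanism. It first canonicalizes the coloring so that it depends only on the type (Lemma \ref{canon_color2}), and then works inside the subspace generated by the Kanellopoulos-style oscillating pyramids $q(\delta,\ell,\mathbf{s})=\sum_{j}(-1)^j(\delta-|j|)\chi^n_{s_{\ell+j}}$, whose crucial feature (Lemma \ref{properties_q}(ii)) is that $-q(\delta,\ell,\mathbf{s})$ agrees with $q(\delta,\ell+1,\mathbf{s})$ up to a one-block shift and sup-distance $1$. This lets one replace an arbitrary signed vector $f\in\langle\mathbf{F}\rangle_{\pm k}$ by a nearby $f'\in X_{\pm k}(\mathbf{s})$ of the \emph{same type} as some positive $f''\in\langle\mathbf{F}\rangle_k$ (Lemma \ref{properties_q2}), and the type-canonicalization then gives $c(f')=c(f'')$, with the positive subspace handled by Theorem \ref{positive_Gowers_fin}. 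Your $r^2$-coloring $(c(g),c(-g))$ only records the colors of a vector and its full negation, which is far too coarse to constrain mixed-sign combinations; and your final paragraph for $k\geq 3$ is a sketch of an iteration whose base case already fails. If you want to repair the argument, the lever has to be the geometry of the generators --- choosing $\mathbf{F}$ so that negation is (approximately) a translation preserving type --- rather than an enriched coloring recording signs.
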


The reduction of Theorem \ref{full_Gowers_fin} to Theorem \ref{positive_Gowers_fin}
essentially relies on the choice of some appropriate functions in $X_{\pm k}(n)$.
This choice has been inspired by the approach in \cite{K} that makes use of $-T$ instead
of $T$ for the proof of the general case.

Finally we consider multidimensional versions of the Theorems \ref{positive_Gowers_fin} and \ref{full_Gowers_fin}, which are presented in Section \ref{section_mult_versions}.


\section{Background Material}
In this section we gather some background material needed in this paper.
\subsection{Grzegorczyk's hierarchy} In order to analyze the rate of growth of the several bounds resulting from our arguments, we will make use of Grzegorczyk's hierarchy of the primitive recursive functions. In this section we overview the basic facts related to this hierarchy.

By the term \textit{number theoretic function} we mean a function of the form $f:\nn^k\to\nn$, where $k$ is a positive integer. The basic examples of number theoretic functions are the constant zero function $z:\nn\to\nn$, the successor function $S:\nn\to\nn$ defined by the rule $S(n)=n+1$  and the projection functions $P^k_j:\nn^k\to\nn$, where $j$ and $k$ are positive integers with $j\mik k$, defined by the rule $P^k_j(n_1,...,n_k)=n_j$.

 Let $h$ be a number theoretic function of arity $k$ and $g_1,...,g_k$
number theoretic functions all of arity $\ell$. We define the \emph{composition} of $h$ with $g_1,...,g_k$ to be the function $f$ of arity $\ell$ defined by the rule
\[f(n_1,...,n_\ell)=h\big(g_1(n_1,...,n_\ell),...,g_k(n_1,...,n_\ell)\big).\]

 The notion of primitive recursion possesses a central role in this setting. Let $g$, $h$ and $f$ be number theoretic functions of arities $k$, $k + 2$ and $k + 1$ respectively. We say that $f$ is defined by primitive recursion from $g$ and $h$ if for every $n,n_1,...,n_k$ in $\nn$ we have
 \[
\left\{ \begin{array} {l} f(0,n_1,...,n_k)=g(n_1,...,n_k)\\
                          f(n+1,n_1,...,n_k)=h(f(n,n_1,...,n_k),n,n_1,...,n_k).\end{array}  \right.
\]

 \begin{defn}
   The class $\mathcal{E}$ of the primitive recursive functions is the smallest class of number theoretic functions that contains the constant zero function, the successor
function and the projection functions, and is closed under composition and primitive
recursion.
 \end{defn}

 We consider the following sequence $(E_q)_q$ of number theoretic functions. The functions $E_0:\nn^2\to\nn$ and $E_1:\nn\to\nn$ are defined by the rules $E_0(n,m)=n+m$
 and $E_1(n)=n^2+2$, while for every positive integer $q$ the function $E_{q+1}:\nn\to\nn$ is defined recursively by the rule
 \[
\left\{ \begin{array} {l} E_{q+1}(0)=2\\
                          E_{q+1}(n+1)=E_q(E_{q+1}(n)).\end{array}  \right.
\]
 Clearly, each $E_q$ is primitive recursive and for every positive integer $q$ the function $E_q$ is increasing.

 Moreover, we say that a class $\mathcal{C}$ of number theoretic functions is closed under
 limited primitive recursion, if for every number theoretic functions $f$, $g$, $h$ and $j$ such that
 \begin{enumerate}
   \item[(i)] $g,h$ and $j$ belong to $\mathcal{C}$,
   \item[(ii)] $f$ is defined by primitive recursion from $g$ and $h$,
   \item[(iii)] $f$ and $j$ have the same arity and
   \item[(iv)] $f$ is pointwise bounded by $j$,
 \end{enumerate}
  we have that $f$ belongs to $\mathcal{C}$.

 \begin{defn}
The Grzegorczyk's class $\mathcal{E}^0$ is the smallest
class of number theoretic functions that contains the constant zero
function, the successor function and the projection functions, and is closed under
composition and limited primitive recursion.

   For every positive integer $q$ the Grzegorczyk's class $\mathcal{E}^q$ is the smallest
class of number theoretic functions that contains the function $E_{q-1}$, the constant zero
function, the successor function and the projection functions, and is closed under
composition and limited primitive recursion.
 \end{defn}

 Some of the basic properties of the hierarchy $(\mathcal{E}^q)_q$ are isolated in the following
 proposition (see \cite[Section 2.2]{R} for further details).

 \begin{prop}
 \label{prim_rec_properties}
   The following hold.
   \begin{enumerate}
     \item[(i)] The hierarchy $(\mathcal{E}^q)_q$ is strictly increasing and $\mathcal{E}=\bigcup_q\mathcal{E}^q$.
     \item[(ii)] If $g, h$ belong to $\mathcal{E}^q$ for some $q$ in $\nn$ and $f$ is defined by primitive recursion from $g$ and $h$, then $f$ belongs to $\mathcal{E}^{q+1}$.
     \item[(iii)] For every integer $q\meg2$ and every $f$ in $\mathcal{E}^q$ there exists an $m$ in $\nn$ such that for every $n_1,...,n_k$ in $\nn$ we have
$f(n_1,...,n_k) \mik E_{q-1}^{(m)}\big(\max\{n_1,...,n_k\}\big)$, where $k$ is the arity of f.
   \end{enumerate}
 \end{prop}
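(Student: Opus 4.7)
The plan is to prove (iii) first, since (ii) follows by reducing to (iii) and (i) reduces in turn to (ii). For (iii), I would proceed by structural induction on the derivation of $f\in\mathcal{E}^q$ for $q\meg 2$. The base cases ($E_{q-1}$, the constant zero, the successor and the projections) each admit an obvious bound of the form $E_{q-1}^{(m)}(\max\{n_1,\dots,n_k\})$ for some small $m$, using the monotonicity of $E_{q-1}$ and the inequality $E_{q-1}(n)\meg n+1$. For composition, if $h$ is majorized by $E_{q-1}^{(m_h)}$ and each $g_i$ by $E_{q-1}^{(m_i)}$, then monotonicity gives $h(g_1(\vec n),\dots,g_k(\vec n))\mik E_{q-1}^{(m_h+\max_i m_i)}(\max\vec n)$. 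For limited primitive recursion the required bound is obtained directly from the bounding function $j$ by invoking the inductive hypothesis on $j$.

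For (ii), the task is to produce an explicit bound $j\in\mathcal{E}^{q+1}$ majorizing $f$. Using (iii) write $g(\vec n)\mik E_{q-1}^{(m_g)}(\max\vec n)$ and $h(y,n,\vec n)\mik E_{q-1}^{(m_h)}(\max\{y,n,\vec n\})$; a straightforward induction on $n$ then yields $f(n,\vec n)\mik E_{q-1}^{(Cn+D)}(\max\{n,\vec n\})$ for suitable constants $C,D$ depending on $m_g,m_h$. Since $E_q(\ell)=E_{q-1}^{(\ell)}(2)$ by unfolding the recursion, this right-hand side is in turn dominated by a fixed iterate of $E_q$ applied to $\max\{n,\vec n\}$; the resulting $j$ lies in $\mathcal{E}^{q+1}$ by composition from $E_q$ and the basic functions, and closure of $\mathcal{E}^{q+1}$ under limited primitive recursion then places $f$ in $\mathcal{E}^{q+1}$. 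The low-level cases $q\in\{0,1\}$ are handled by separate elementary arguments adapted to the concrete forms of $E_0$ and $E_1$.

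For (i), the inclusion $\mathcal{E}^q\subseteq\mathcal{E}^{q+1}$ follows from recovering $E_{q-1}$ inside $\mathcal{E}^{q+1}$: the bound $E_{q-1}(n)\mik E_q(n+1)$ provides an admissible majorant for a limited primitive recursion defining $E_{q-1}$ from the basic functions and $E_q$. Strictness for $q\meg 2$ follows from $E_q\in\mathcal{E}^{q+1}\setminus\mathcal{E}^q$: by (iii) every element of $\mathcal{E}^q$ is majorized by a fixed iterate of $E_{q-1}$, whereas the iteration count implicit in $E_q(n+1)=E_{q-1}(E_q(n))$ grows unboundedly in $n$. Finally $\mathcal{E}=\bigcup_q\mathcal{E}^q$ follows by induction on the derivation of a primitive recursive function, the primitive recursion step being exactly (ii). The main obstacle is the engineering of the bound in (ii): it rests on the crucial arithmetic fact that a linear-in-$n$ iteration of $E_{q-1}$ is absorbed by a fixed iteration of $E_q$, which is precisely what separates adjacent levels of the hierarchy and is what makes limited primitive recursion preserve the class $\mathcal{E}^{q+1}$.
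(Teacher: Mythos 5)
The paper does not actually prove Proposition \ref{prim_rec_properties}: it states it without proof and points to \cite[Section~2.2]{R}. So there is no in-paper argument to compare against, and I'll simply evaluate your sketch on its own terms. For $q\meg 2$, your plan is the standard one and is essentially correct: (iii) by structural induction, using that a composition of functions majorized by iterates of $E_{q-1}$ is again majorized by a single iterate, and that limited primitive recursion inherits the bound from the majorant $j$; and then (ii) by the inductive estimate $f(n,\vec n)\mik E_{q-1}^{(m_h n+m_g)}(\max\{n,\vec n\})$ together with the absorption $E_{q-1}^{(Cn+D)}(\max\{n,\vec n\})\mik E_q^{(m)}(\max\{n,\vec n\})$, which indeed rests on $E_q(\ell)=E_{q-1}^{(\ell)}(2)$. (In (i) the remark that one can define $E_{q-1}$ ``from the basic functions and $E_q$'' is not literally right --- the primitive recursion defining $E_{q-1}$ calls $E_{q-2}$, so you first need $E_{q-2}\in\mathcal{E}^{q+1}$ by an induction on $q$ --- but that is a small slip, not a conceptual gap.)

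The genuine gap is the sentence claiming that ``the low-level cases $q\in\{0,1\}$ are handled by separate elementary arguments.'' No such argument exists for $q=1$, because part~(ii) as literally stated is \emph{false} there. Take $g=z$ (the unary zero function) and $h(y,n,m)=S\big(E_0(P^3_1(y,n,m),P^3_1(y,n,m))\big)=2y+1$; both lie in $\mathcal{E}^1$ by composition of basic functions. The function $f$ defined by primitive recursion from $g$ and $h$ satisfies $f(n,m)=2^n-1$. By exactly the induction you propose for~(iii), every member of $\mathcal{E}^2$ is pointwise dominated by some fixed iterate $E_1^{(m)}$ of $E_1(n)=n^2+2$, i.e.\ is polynomially bounded; since $2^n-1$ is not polynomially bounded, $f\notin\mathcal{E}^2$. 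Thus the hypothesis ``$q\meg 2$'' in (iii) is not merely a technical convenience that lets (ii) piggyback on it --- it marks the genuine threshold at which primitive recursion moves you up exactly one Grzegorczyk level, and (ii) should be read with the same restriction $q\meg 2$ (as in Rose; and as the paper in fact uses it, always with $q\meg 6$). Your sketch treats the low levels as a routine mop-up when in fact one of them is a counterexample; you should drop that claim and state (ii) for $q\meg 2$ only.
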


 By conclusion (iii) of the above proposition we have, in particular, that every number theoretic function belonging to some class $\mathcal{E}^q$ is dominated by an increasing unary function in
 the same class $\mathcal{E}^q$. More
generally, we have the following corollary.

\begin{cor}
  For every $q$ in $\nn$ and every $f$ in $\mathcal{E}^q$ of arity $k$ there exists
  a function $F$ in $\mathcal{E}^q$ of arity $k$ that pointwise dominates $f$ and satisfies
  \begin{equation}
    \label{eq_mon}
    F(n_1,...,n_k)\mik F(m_1,...,m_k)
  \end{equation}
  for every choice of $n_1,...,n_k,m_1,...,m_k$ with $n_i\mik m_i$ for all $i=1,...,k$.
\end{cor}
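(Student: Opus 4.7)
The plan is to apply Proposition~\ref{prim_rec_properties}(iii) directly. For the substantive case $q\meg 2$, given $f\in\mathcal{E}^q$ of arity $k$, I would fix the integer $m$ supplied by (iii) so that $f(n_1,\ldots,n_k)\mik E_{q-1}^{(m)}(\max\{n_1,\ldots,n_k\})$ for every tuple, and simply take
\[F(n_1,\ldots,n_k)=E_{q-1}^{(m)}\bigl(\max\{n_1,\ldots,n_k\}\bigr).\]
Pointwise domination of $f$ by $F$ is then automatic.

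Next I would verify the two remaining properties. Monotonicity \eqref{eq_mon} follows because the $k$-ary max is non-decreasing in each coordinate while $E_{q-1}$ is strictly increasing for $q\meg 2$, so every iterate $E_{q-1}^{(m)}$ is increasing and $F$ inherits monotonicity in each variable. For the membership $F\in\mathcal{E}^q$ I would chain closure under composition: $E_{q-1}$ is a generator of $\mathcal{E}^q$, hence $E_{q-1}^{(m)}\in\mathcal{E}^q$; the binary max belongs to $\mathcal{E}^1\subseteq\mathcal{E}^q$ via the identity $\max(x,y)=x+(y-x)_+$, where truncated subtraction $(y-x)_+$ lies in $\mathcal{E}^0$ through limited primitive recursion bounded by its own first argument; the $k$-ary max is then obtained from the binary one by $k-1$ further compositions, hence remains in $\mathcal{E}^1$.

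The residual low cases $q=0,1$ do not come up in the applications of this corollary later in the paper, but they can be dispatched by the same blueprint using elementary bounds: $\max\{n_1,\ldots,n_k\}+c$ dominates every function in $\mathcal{E}^0$, and a polynomial in the max dominates every function in $\mathcal{E}^1$, both of which are manifestly monotone members of the corresponding class. The only mildly delicate step, which I expect to be the main obstacle, is the bookkeeping verification that the $k$-ary max truly lies in $\mathcal{E}^1$ rather than in a higher class, so that its composition with $E_{q-1}^{(m)}$ does not accidentally escape $\mathcal{E}^q$; this, however, reduces to the standard observation that truncated subtraction is bounded by its first argument and therefore sits in $\mathcal{E}^0$.
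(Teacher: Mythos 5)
Your proof is correct and follows exactly the route the paper leaves implicit: take the unary increasing dominator $E_{q-1}^{(m)}$ furnished by Proposition~\ref{prim_rec_properties}(iii) and precompose with the $k$-ary maximum. The supporting detail that $\max\in\mathcal{E}^1\subseteq\mathcal{E}^q$ for $q\meg2$ (via truncated subtraction and addition) is precisely the right bookkeeping, and the low cases $q\in\{0,1\}$ you set aside are indeed not used anywhere in the paper.
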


Thus, by the above corollary, we may assume that all primitive recursive functions
we are dealing with in the present paper satisfy the monotonicity property described in \eqref{eq_mon}.

\subsection{The Milliken--Taylor Theorem}
The proofs of Theorems \ref{positive_Gowers_fin} and \ref{full_Gowers_fin}
make use of the finite version of the Milliken--Taylor theorem \cite{M, Tay1}.
To state it we need to introduce some pieces of
notation. Let $m,d$ be positive integers with $d\mik m$.  A finite sequence $\mathbf{s}=(s_i)_{i=0}^{m-1}$ of nonempty finite subsets of $\nn$ is called block if $\max s_i<\min s_{i+1}$
for all $0\mik i<m-1$.
For a block sequence $\mathbf{s}=(s_i)_{i=0}^{m-1}$ of nonempty finite subsets of $\nn$ we define the set of nonempty unions of $\mathbf{s}$ to be
\[\mathrm{NU}(\mathbf{s})=\Big\{\bigcup_{i\in t}s_i:t\;\text{is a nonempty subset of}\;\{0,...,m-1\}\Big\}.\]
We say that a block sequence $\mathbf{t}=(t_i)_{i=0}^{d-1}$ of nonempty finite subsets of $\nn$ is a block subsequence of $\mathbf{s}$ if $t_i\in\mathrm{NU}(\mathbf{s})$ for all $0\mik i<d$.
By $\mathrm{Block}^d(\mathbf{s})$ we denote the set of all block subsequences of $\mathbf{s}$ of length $d$. Moreover, for simplicity, by $\mathrm{Block}^d(m)$, we denote the set $\mathrm{Block}^d\big((\{i\})_{i=0}^{m-1}\big)$.
The finite version of the Milliken-Taylor theorem is stated as follows.
\begin{thm}
  \label{Mil_Tay_fin}
  For every triple $d,m,r$ of positive integers with $d\mik m$, there
exists a positive integer $n_0$ with the following property. For every finite block sequence $\mathbf{s}$ of nonempty
finite subsets of $\nn$
of length at least $n_0$ and every coloring of the set $\mathrm{Block}^d(\mathbf{s})$ with $r$ colors, there exists a block subsequence $\mathbf{t}$ of $\mathbf{s}$ of length $m$
such that the set $\mathrm{Block}^d(\mathbf{t})$ is monochromatic.
We denote the least $n_0$ satisfying the above property by $\mathrm{MT}(d,m,r)$.

  Moreover, the numbers $\mathrm{MT}(d,m,r)$ are upper bounded by a primitive recursive function belonging to the class $\mathcal{E}^6$ of Grzegorczyk's hierarchy.
\end{thm}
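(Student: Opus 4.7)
The plan is to reduce Theorem \ref{Mil_Tay_fin} to Shelah's finite version of the Graham--Rothschild theorem for parameter words over a $(d+1)$-element alphabet, for which primitive recursive bounds in $\mathcal{E}^6$ are known. A length-$d$ block subsequence $(u_0,\dots,u_{d-1})$ of a block sequence $\mathbf{s}=(s_i)_{i=0}^{n-1}$ is encoded as the word $w\in\{0,1,\dots,d\}^n$ assigning the value $\ell+1$ to positions in $u_\ell$ and $0$ elsewhere. The image of this encoding is precisely the set of words taking every symbol in $\{1,\dots,d\}$, with all positions of value $j$ strictly preceding those of value $j+1$.

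Given an $r$-coloring of $\mathrm{Block}^d(\mathbf{s})$, I would extend it to a coloring $\tilde c$ of all of $\{0,1,\dots,d\}^n$, introducing one additional color for words not in the image of the encoding. Applying Graham--Rothschild to $\tilde c$ with alphabet size $d+1$ and a suitably enlarged subspace dimension $M=M(d,m)$ produces a monochromatic combinatorial $M$-subspace, determined by pairwise disjoint wildcard sets $W_0<\dots<W_{M-1}$ and a constant part $w_0\colon\{0,\dots,n-1\}\to\{0,\dots,d\}$. A preliminary stabilization argument (either through an auxiliary coloring that tracks the pattern of nonzero constants, or by invoking a \emph{pointed} variant of Graham--Rothschild that forces all non-wildcard positions to carry the symbol $0$) secures that $w_0\equiv 0$, which in turn forces the monochromatic color of $\tilde c$ to lie in $\{1,\dots,r\}$ rather than equal the auxiliary color, since evaluations of the form ``assign $\ell+1$ to $W_{j_\ell}$ and $0$ elsewhere'' lie in the image of the encoding. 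Setting $t_j=W_j$ for $j<m$, every tuple $0\mik j_0<\dots<j_{d-1}<m$ gives rise to such an evaluation encoding $(t_{j_0},\dots,t_{j_{d-1}})$, so $\mathrm{Block}^d(\mathbf{t})$ is monochromatic.

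The main obstacle is ensuring that the resulting bound lies in $\mathcal{E}^6$. Since the reduction consists of a single Graham--Rothschild application (with inputs polynomial in $d$, $m$, $r$) together with a bounded number of auxiliary operations, closure of $\mathcal{E}^6$ under composition and the monotonicity corollary following Proposition \ref{prim_rec_properties} transfer the $\mathcal{E}^6$ bound from Graham--Rothschild to $\mathrm{MT}(d,m,r)$. The most delicate point is the stabilization of the constant part $w_0$, which cannot be achieved by a naive induction on $d$ without risking an ascent into $\mathcal{E}^7$: iterating Graham--Rothschild $d$ times produces a recursive definition whose depth depends on the variable $d$, and such unbounded iterations typically leave $\mathcal{E}^6$. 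By performing the reduction in one shot with an appropriately inflated alphabet of size $d+1$, this escape is averted, and the analysis reduces to a single composition of functions already in $\mathcal{E}^6$.
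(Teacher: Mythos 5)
Your proposal takes a genuinely different route from the paper. The paper does not reduce to Graham--Rothschild at all: it obtains the case $d=1$ (finite Hindman) directly from Taylor's bounds for the disjoint union theorem together with Ramsey's theorem (both in $\mathcal{E}^4$), and then climbs to arbitrary $d$ by an iteration of exactly the kind you are worried about, patterned on Section~\ref{section_mult_versions}. Your concern that such an iteration ``typically leaves $\mathcal{E}^6$'' does not apply here: the iteration is a double primitive recursion on top of an $\mathcal{E}^4$ base, and two applications of Proposition~\ref{prim_rec_properties}(ii) land precisely in $\mathcal{E}^6$. So the iteration route is not only viable, it is the one the paper takes.

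The more serious problem is a genuine gap in your reduction. When you write the monochromatic combinatorial $M$-subspace as ``pairwise disjoint wildcard sets $W_0<\dots<W_{M-1}$'' you are silently assuming that $\max W_j<\min W_{j+1}$. Graham--Rothschild (and Hales--Jewett for subspaces) only guarantees $\min W_0<\min W_1<\dots<\min W_{M-1}$; the wildcard sets may interleave arbitrarily, and there are configurations (e.g.\ nested wildcards $W_j=\{j,n-1-j\}$) in which no $d$-subset of the $W_j$'s, nor any family of unions of them, forms a block. In that event every evaluation with nonconstant support fails to lie in the image of your encoding, the auxiliary color is the only color the subspace can realize, and the argument produces nothing. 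The block property of the wildcards is exactly the content that separates Milliken--Taylor (and the disjoint union theorem) from Graham--Rothschild; it cannot be obtained for free from the statement of Graham--Rothschild. One could try to rescue the reduction by appealing to the specific structure of Shelah's construction (his wildcard sets are column intervals), but that is a feature of a particular proof, not of the theorem you are quoting, and you do not invoke it. The stabilization of $w_0$ suffers from a related difficulty: a sub-subspace cannot modify the inherited constant part, so neither the ``auxiliary coloring of constant patterns'' nor a ``pointed variant'' is obviously sufficient as stated, and this step needs an actual argument rather than a gesture.
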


Note that the case ``$d=1$'' of Theorem \ref{Mil_Tay_fin} is the finite version of Hindman's theorem \cite{H}.
This finite version follows by the disjoint union theorem \cite{GR, Tay2} and Ramsey's theorem.
The bounds for the disjoint union theorem given in \cite{Tay2}, as well as, the bound for the Ramsey numbers given in \cite{ER} are in $\mathcal{E}^4$. Using these bounds, one can see that the
numbers $\mathrm{MT}(1,m,r)$ are upper bounded by a primitive recursive function belonging to the class $\mathcal{E}^4$ of Grzegorczyk's hierarchy.
The higher dimensional case of Theorem \ref{Mil_Tay_fin} (that is, the case $d\meg2$), follows
by a standard iteration argument similar to the one in Section \ref{section_mult_versions}
(see \cite{DK} for further details).


\section{Types and Insensitivity}\label{section_typ_ins}
Let us start with some notation. Let $d,n$ be positive integers with $d\mik n$ and $\mathbf{F}$ be a block sequence in $X_k(n)$ (resp. in $X_{\pm k}(n)$). We say that a block sequence $\mathbf{G}=(g_i)_{i=0}^{d-1}$ in  $X_k(n)$ (resp. in $X_{\pm k}(n)$) is a block subsequence of $\mathbf{F}$
if $g_i$ belongs to $\langle\mathbf{F}\rangle_k$ (resp. $g_i$ belongs to $\langle \mathbf{F} \rangle_{\pm k}$) for all $0\mik i<d$.
Moreover, by $\mathrm{Block}_k^d(\mathbf{F})$ (resp. $\mathrm{Block}_{\pm k}^d(\mathbf{F})$) we denote the set of all block subsequences of $\mathbf{F}$ of length $d$. For simplicity, by $\mathrm{Block}_k^d(n)$ (resp. $\mathrm{Block}_{\pm k}^d(n)$), we denote the set $\mathrm{Block}_k^d((k\cdot\chi^n_{\{i\}})_{i=0}^{n-1})$ (resp. $\mathrm{Block}_{\pm k}^d((k\cdot\chi^n_{\{i\}})_{i=0}^{n-1})$), where by $\chi^n_{A}$ we denote the characteristic function, defined on $\{0,...,n-1\}$, of a finite nonempty subset $A$ of $\nn$ with $\max A<n$. Finally, for every finite sequence $\mathbf{b}$ and every non-negative integer
$d$ less that or equal to the length of $\mathbf{b}$, by $\mathbf{b}|d$, we denote the initial segment of $\mathbf{b}$ of
length $d$.

The proof of Theorem \ref{positive_Gowers_fin} proceeds by induction on $k$.
Central role in the proof of the inductive step possesses
the notion of insensitivity, which we are about to define.
\begin{defn}
  Let $m,n,k$ be positive integers with $m\mik n$ and $\mathbf{F}\in \mathrm{Block}_{k}^{m}(n)$.
  We say that a coloring of the set $X_k(n)$ is insensitive over $\mathbf{F}$ if for
  every $f,f'$ in $\langle\mathbf{F}\rangle_k$
  disjointly supported, we have that $f$ and $f+T^{(k-1)}(f')$ have the same color.
\end{defn}

Let as point out that the notion of insensitivity is hereditary. In particular, we have the
following easy to observe fact.

\begin{fact}
  \label{fact_ins}
  Let $m,n,k$ be positive integers with $m\mik n$ and $\mathbf{F}\in \mathrm{Block}_{k}^{m}(n)$.
  Also let $c$ be a coloring of the set $X_k(n)$ and assume that $c$ is insensitive over $\mathbf{F}$.
  Then for every block subsequence $\mathbf{G}$ of $\mathbf{F}$, we have that $c$ is insensitive over $\mathbf{G}$.
\end{fact}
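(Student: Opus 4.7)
The plan is to reduce the statement to the set-theoretic inclusion $\langle \mathbf{G} \rangle_k \subseteq \langle \mathbf{F} \rangle_k$; once this inclusion is established, the conclusion is immediate. Indeed, disjoint support is an intrinsic property of a pair of functions and does not depend on any particular representation, so any two disjointly supported $f,f' \in \langle \mathbf{G} \rangle_k$ also form a disjointly supported pair in $\langle \mathbf{F} \rangle_k$, and the hypothesis that $c$ is insensitive over $\mathbf{F}$ yields $c(f) = c(f + T^{(k-1)}(f'))$ as required.

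To prove the inclusion, I would write $\mathbf{G} = (g_i)_{i=0}^{d-1}$ and take an arbitrary element
\[
h = \sum_{i=1}^{\ell} T^{\ee_i}(g_{j_i}), \qquad \ee_i \in \{0,\dots,k-1\}, \quad \min_{1\mik i\mik \ell} \ee_i = 0,
\]
of $\langle \mathbf{G} \rangle_k$. Since each $g_{j_i} \in \langle \mathbf{F} \rangle_k$, it has a representation $g_{j_i} = \sum_{s} T^{\delta_{i,s}}(f_{p_{i,s}})$ with the $p_{i,s}$ strictly increasing in $s$, the $\delta_{i,s}$ in $\{0,\dots,k-1\}$, and $\min_s \delta_{i,s} = 0$. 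The map $T$ acts componentwise and the $f_{p_{i,s}}$ are disjointly supported with non-negative values, so $T^{\ee_i}$ distributes over the inner sum and composes as $T^{\ee_i} \circ T^{\delta_{i,s}} = T^{\ee_i + \delta_{i,s}}$, giving
\[
h = \sum_{i=1}^{\ell} \sum_{s} T^{\ee_i + \delta_{i,s}}(f_{p_{i,s}}).
\]

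It remains to verify that, after discarding vanishing summands, this representation satisfies the defining conditions of $\langle \mathbf{F} \rangle_k$. The indices $p_{i,s}$ are pairwise distinct and appear in strictly increasing order, because the block structure of $\mathbf{G}$ forces the supports of the $g_{j_i}$, hence the subcollections of $\mathbf{F}$ that they involve, to lie in disjoint intervals ordered by $i$. Any summand with $\ee_i + \delta_{i,s} \meg k$ is identically zero, since $T^k f = 0$ for every $f \in X_{[k]}(n)$, and may be dropped; the remaining exponents then lie in $\{0,\dots,k-1\}$. Finally, a summand with exponent $0$ survives: pick $i_0$ with $\ee_{i_0} = 0$, which exists since $\min_i \ee_i = 0$, and then $s_0$ with $\delta_{i_0,s_0} = 0$, which exists by the corresponding condition in the expansion of $g_{j_{i_0}}$. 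No genuine obstacle arises; the argument is pure transitivity of the ``subspace'' relation, and the only real work is the bookkeeping required to check that the defining conditions of $\langle \mathbf{F} \rangle_k$ are preserved by the expansion.
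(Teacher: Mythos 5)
Your proposal is correct and fills in exactly the argument the paper leaves implicit: the paper states Fact \ref{fact_ins} as an ``easy to observe fact'' with no written proof, and the intended reasoning is precisely the transitivity of the subspace relation, $\langle \mathbf{G} \rangle_k \subseteq \langle \mathbf{F} \rangle_k$, which you establish. Your two supporting observations --- that $T$ distributes over sums of disjointly supported nonnegative functions (so $T^{\ee_i}$ passes through the inner sum and exponents add), and that terms with combined exponent at least $k$ vanish since $T^k \equiv 0$ on $X_{[k]}(n)$ --- together with the check that a zero exponent survives, are exactly the bookkeeping needed.
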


In order to carry out the inductive step of the proof of Theorem \ref{positive_Gowers_fin}
we need to make an arbitrary coloring of the set $X_k(n)$ insensitive over a long enough block
sequence $\mathbf{F}$ in $X_k(n)$. To this end, we will need the notion of a type.

Let $n,k$ be positive integers.
For every $d\mik n$, $\mathbf{s}=(s_i)_{i=0}^{d-1}$ in $\mathrm{Block}^d(n)$ and
$g\in X_{[\pm k]}(d)$, we set
\[\mathrm{map}(g,\mathbf{s})=\sum_{i=0}^{d-1}g(i)\chi^n_{s_i}\]
which is clearly an element of $X_{[\pm k]}(n)$.
Moreover, for every $d\mik n$, a type of length $d$ over $k$ (resp $\pm k$) is a function $\varphi$ in $X_k(d)$ (resp. $X_{\pm k}(d)$), such that $\varphi(i)\neq\varphi(i+1)$ for all $0\mik i<d-1$ and $\supp(\varphi)=\{0,...,d-1\}$.
By $|\varphi|$, we denote the length of a type $\varphi$.
Observe that for every function $f$ in $X_{\pm k}(n)$ there exist a
unique type of some length $d\mik n$, which we denote by $\mathrm{tp}(f)$, and a unique block sequence of nonempty finite subsets of
$\nn$ of length $d$, which we denote by $\mathrm{bsupp}(f)$, such that $f=\mathrm{map}(\mathrm{tp}(f),\mathrm{bsupp}(f))$.
Finally, for $d\mik n$ and $\mathbf{s}=(s_i)_{i=0}^{d-1}$
in $\mathrm{Block}^d(n)$, we define the spaces generated by $\mathbf{s}$ as $X_{k}(\mathbf{s})=\langle (k\cdot\chi^n_{s_i})_{i=0}^{d-1} \rangle_k$
and  $X_{\pm k}(\mathbf{s})=\langle (k\cdot\chi^n_{s_i})_{i=0}^{d-1} \rangle_{\pm k}$.
By Theorem \ref{Mil_Tay_fin} we obtain a canonicalization of a given coloring with respect to the types.

\begin{lem}
  \label{canon_color1}
  Let $m,n,k,r$ be positive integers, with $n\meg \mathrm{MT}(m,2m-1,r^\alpha)$, where $\alpha=\sum_{d=1}^m{d(k-1)^{d-1}}$. Then for every coloring of the set $X_k(n)$ with $r$ colors, there
  exists  $\mathbf{s}$ in $\mathrm{Block}^m(n)$ such that every $f,f'$ in $X_k(\mathbf{s})$ of the same type have the same color.
\end{lem}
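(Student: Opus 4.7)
The plan is to apply Theorem~\ref{Mil_Tay_fin} to a single product coloring indexed by all possible types. Specifically, for each type $\varphi$ of length $d\mik m$ over $k$, let $c_\varphi:\mathrm{Block}^d(n)\to\{1,\ldots,r\}$ be the auxiliary coloring defined by $c_\varphi(\mathbf{u})=c(\mathrm{map}(\varphi,\mathbf{u}))$. The number of types of length $d$ over $k$ is at most $d(k-1)^{d-1}$ by the following over-count: pick a position $p\in\{0,\ldots,d-1\}$ at which the value $k$ is to appear ($d$ choices), and then fill in the remaining $d-1$ positions sequentially, each having at most $k-1$ options because it must differ from its predecessor. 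Summing over $d\mik m$, the total number of types of length at most $m$ is at most $\alpha$, so I can package all the $c_\varphi$'s into a single coloring $C$ of $\mathrm{Block}^m(n)$ with at most $r^\alpha$ colors, whose $\varphi$-coordinate on $\mathbf{u}=(u_0,\ldots,u_{m-1})$ is $c_\varphi(u_0,\ldots,u_{|\varphi|-1})$.

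By the hypothesis $n\meg\mathrm{MT}(m,2m-1,r^\alpha)$, Theorem~\ref{Mil_Tay_fin} applied to $C$ and to the singleton block sequence $(\{i\})_{i=0}^{n-1}$ produces a block subsequence $\mathbf{t}=(t_0,\ldots,t_{2m-2})$ of length $2m-1$ such that $C$ is constant on $\mathrm{Block}^m(\mathbf{t})$. I set $\mathbf{s}=(t_0,\ldots,t_{m-1})$, the initial segment of $\mathbf{t}$ of length $m$, and claim this $\mathbf{s}$ has the desired property.

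Fix any type $\varphi$ of length $d\mik m$ and any $\mathbf{u}=(u_0,\ldots,u_{d-1})\in\mathrm{Block}^d(\mathbf{s})$. Since $\mathbf{u}$ only uses the blocks $t_0,\ldots,t_{m-1}$, it can be extended to
\[
\mathbf{v}=(u_0,\ldots,u_{d-1},t_m,t_{m+1},\ldots,t_{2m-d-1}),
\]
which is a block sequence of length $m$ belonging to $\mathrm{Block}^m(\mathbf{t})$ whose initial segment of length $d$ equals $\mathbf{u}$. Hence $c_\varphi(\mathbf{u})$ is the $\varphi$-coordinate of $C(\mathbf{v})$, which is the common constant value of $C$ on $\mathrm{Block}^m(\mathbf{t})$ and is therefore independent of $\mathbf{u}$. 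Thus $c_\varphi$ is constant on $\mathrm{Block}^d(\mathbf{s})$ for every type $\varphi$ of length at most $m$.

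To finish, observe that every $f\in X_k(\mathbf{s})$ admits the decomposition $f=\mathrm{map}(\mathrm{tp}(f),\mathrm{bsupp}(f))$ with $\mathrm{bsupp}(f)\in\mathrm{Block}^{|\mathrm{tp}(f)|}(\mathbf{s})$: writing $f=\sum_i(k-\ee_i)\chi^n_{s_{j_i}}$ and then merging consecutive summands whose coefficients coincide exhibits $\mathrm{bsupp}(f)$ as a block sequence of nonempty unions of elements of $\mathbf{s}$. Consequently $c(f)=c_{\mathrm{tp}(f)}(\mathrm{bsupp}(f))$ depends only on $\mathrm{tp}(f)$, which is precisely the conclusion of the lemma. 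The only mildly delicate point of the argument is the choice of auxiliary length $2m-1$: the $m-1$ extra blocks beyond the first $m$ are exactly what is needed to extend an arbitrary $\mathbf{u}\in\mathrm{Block}^d(\mathbf{s})$ to a member of $\mathrm{Block}^m(\mathbf{t})$ uniformly over all $d\in\{1,\ldots,m\}$.
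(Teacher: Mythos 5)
Your argument is correct and is essentially the paper's proof: the product coloring $C$ you build is exactly the paper's $\tilde{c}$, and the extension of a length-$d$ block subsequence of $\mathbf{s}$ to a length-$m$ member of $\mathrm{Block}^m(\mathbf{t})$ using the reserve blocks $t_m,\ldots,t_{2m-2}$ is precisely the end-extension step used in the paper.
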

\begin{proof}
  First let us observe that for every positive integer $d$ the number of types over $k$ of length $d$ is at most $d(k-1)^{d-1}$. Thus the number of types over $k$ of length at most $m$ is at most $\alpha$.
  Let $c:X_k(n)\to\{1,...,r\}$ be a coloring.
  Also let $\mathcal{T}$ be the set of all types over $k$ of length at most $m$ and $\mathcal{X}$ the set of all the maps from $\mathcal{T}$ into $\{1,...,r\}$. As we have already pointed out the set $\mathcal{T}$ has cardinality at most $\alpha$ and therefore the set $\mathcal{X}$ has cardinality at most $r^\alpha$. We define a coloring
  $\tilde{c}:\mathrm{Block}^m(n)\to\mathcal{X}$ as follows. For every $\mathbf{t}$ in $\mathrm{Block}^m(n)$, we first define $q_\mathbf{t}$ in $\mathcal{X}$ as follows. For every type $\varphi$ in $\mathcal{T}$
  we set $q_{\mathbf{t}}(\varphi)=c(\mathrm{map}(\varphi,\mathbf{t}|d))$, where $d$ is the length of $\varphi$. Finally, we set $\tilde{c}(\mathbf{t})=q_\mathbf{t}$ for all $\mathbf{t}$ in $\mathrm{Block}^m(n)$.

  Since $n\meg \mathrm{MT}(m,2m-1,r^\alpha)$, applying Theorem \ref{Mil_Tay_fin},
  we obtain a block sequence $\mathbf{s}'\in\mathrm{Block}^{2m-1}(n)$ such that the set $\mathrm{Block}^m(\mathbf{s'})$ is $\tilde{c}$-monochromatic.
  That is, there exists $q$ in $\mathcal{X}$ such that for every $\mathbf{t}$ in $\mathrm{Block}^m(\mathbf{s}')$ we have that
  $q_\mathbf{t}=q$.
  We set $\mathbf{s}=\mathbf{s}'|m$ and
  we observe that $\mathbf{s}$ is as desired. Indeed let $f,f'$ in $X_k(\mathbf{s})$ of the same type
  $\varphi$. Let $d$ be the length of $\varphi$. Clearly $1\mik d\mik m$. Since $\mathbf{s}$
  is the initial segment of $\mathbf{s}'$ of length $m$ and $\mathbf{s}'$ is of length $2m-1$, we can
  end-extend both $\mathrm{bsupp}(f)$ and $\mathrm{bsupp}(f')$ into $\mathbf{t}$ and
  $\mathbf{t}'$ respectively elements of $\mathrm{Block}^m(\mathbf{s}')$. Clearly, $f=\mathrm{map}(\varphi,\mathbf{t}|d)$ and $f'=\mathrm{map}(\varphi,\mathbf{t'}|d)$. Thus
  $c(f)=q_{\mathbf{t}}(\varphi)=q(\varphi)=q_{\mathbf{t}'}(\varphi)=c(f')$. The proof is complete.
\end{proof}
The above lemma is the main tool to obtain the color insensitivity by passing to a subspace.
In particular, we have the following.
\begin{lem}
  \label{insensitivity1}
  Let $m,n,k,r$ be positive integers, with \[n\meg \mathrm{MT}(m(2k-1),2m(2k-1)-1,r^\alpha),\] where $\alpha=\sum_{d=1}^{m(2k-1)}{d(k-1)^{d-1}}$. Then for every coloring $c$ of the set $X_k(n)$ with $r$ colors, there
  exists  $\mathbf{F}$ in $\mathrm{Block}^m_k(n)$ such that $c$ is insensitive over $\mathbf{F}$.
\end{lem}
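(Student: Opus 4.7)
The plan is to combine Lemma \ref{canon_color1} with a ``pyramid'' choice of the $f_j$'s that forces every element of $\langle\mathbf{F}\rangle_k$ to have a type which is invariant under the addition of $T^{k-1}$ applied to a disjointly supported element. I would first apply Lemma \ref{canon_color1} with length parameter $m(2k-1)$ in place of $m$; the hypothesis on $n$ is precisely what this requires, so it produces a block sequence $\mathbf{s}=(s_i)_{i=0}^{m(2k-1)-1}$ in $\mathrm{Block}^{m(2k-1)}(n)$ such that any two elements of $X_k(\mathbf{s})$ sharing the same type receive the same $c$-color.

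Next I would partition the blocks of $\mathbf{s}$ into $m$ consecutive batches of $2k-1$ and shape each batch as a pyramid. Let $\varphi=(1,2,\ldots,k-1,k,k-1,\ldots,2,1)\in X_k(2k-1)$ and, for each $j=0,\ldots,m-1$, set $\mathbf{s}_j=(s_{j(2k-1)+i})_{i=0}^{2k-2}$ and $f_j=\mathrm{map}(\varphi,\mathbf{s}_j)$. Then $\mathbf{F}=(f_j)_{j=0}^{m-1}$ lies in $\mathrm{Block}^m_k(n)$, and every element of $\langle\mathbf{F}\rangle_k$ is piecewise constant on the blocks of $\mathbf{s}$ with values in $\{0,1,\ldots,k\}$ and attains $k$ somewhere, so $\langle\mathbf{F}\rangle_k\subseteq X_k(\mathbf{s})$ and the same inclusion holds for $g+T^{k-1}(g')$ whenever $g,g'\in\langle\mathbf{F}\rangle_k$ have disjoint supports.

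The core step is then to show that $\mathrm{tp}(g)=\mathrm{tp}(g+T^{k-1}(g'))$ for every such pair; combined with the canonicalization over $\mathbf{s}$, this immediately yields $c(g)=c(g+T^{k-1}(g'))$ and hence the desired insensitivity. Writing $g=\sum_{i=1}^{\ell}T^{\ee_i}(f_{j_i})$ with $\min_i\ee_i=0$, each summand is a ``bump'' with values $(1,2,\ldots,k-\ee_i,\ldots,2,1)$ on the central $2(k-\ee_i)-1$ blocks of $\mathbf{s}_{j_i}$, so its leftmost and rightmost nonzero blocks both carry value $1$. Since no type can have two consecutive equal entries, every maximal run of value-$1$ constancy regions of $g$ that is not broken by a value $\meg 2$ must collapse into a single block of $\mathrm{bsupp}(g)$; hence $\mathrm{tp}(g)$ reduces to the alternating sequence of $1$'s and pyramid-peaks $(2,\ldots,h,\ldots,2)$ determined by the sub-sequence of heights $h_i=k-\ee_i$ with $h_i\meg 2$, framed by $1$'s on both sides. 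Now $T^{k-1}(g')$ is the characteristic function of the union of the middle blocks of those $\mathbf{s}_{j'}$ on which $g'$ uses $\ee=0$; adding it to $g$ injects only additional value-$1$ blocks, disjoint from $\supp(g)$, which neither create a new peak of height $\meg 2$ nor split an existing one, and so they get absorbed by the same merging principle into the value-$1$ blocks already present in $\mathrm{tp}(g)$.

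The step I expect to be most delicate is this merging verification: one must track precisely how several geographically disjoint value-$1$ regions combine into a single element of the unique block support, and then carry this through the addition of $T^{k-1}(g')$. Once the merging principle is properly formulated, the identification of the two types is essentially forced by the pyramid shape of $\varphi$, and the lemma follows.
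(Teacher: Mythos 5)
Your proposal is correct and is essentially the paper's own argument: apply Lemma \ref{canon_color1} to the parameter $m(2k-1)$ to obtain the type-canonicalized $\mathbf{s}$, build each $f_j$ as the pyramid $\mathrm{map}((1,2,\dots,k,\dots,2,1),\mathbf{s}_j)$ on a batch of $2k-1$ consecutive $\mathbf{s}$-blocks, note that $T^{k-1}(f')$ reduces to a characteristic function $\chi^n_s$ with $s\in\mathrm{NU}(\mathbf{s})$ disjoint from $\supp(f)$, and conclude $\mathrm{tp}(f)=\mathrm{tp}(f+T^{k-1}(f'))$ because every summand begins and ends at value $1$, so the added $1$'s merge into existing $1$-blocks. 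The merging verification you flag as delicate is sound and is the same observation the paper invokes when it notes the pyramid shape forces $T^{(\ee)}(f_i)$ to take the value $1$ at both ends of its support.
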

\begin{proof}
  Let $c$ be a coloring of the set $X_k(n)$ with $r$ colors and $p=m(2k-1)$.
  Since $n\meg\mathrm{MT}(m(2k-1),2m(2k-1)-1,r^\alpha)$, applying Lemma \ref{canon_color1},
  we obtain a block sequence $\mathbf{s}=(s_i)_{i=0}^{p-1}\in\mathrm{Block}^p(n)$
  such that every $f,f'$ in $X_k(\mathbf{s})$ of the same type have the same color.
  The desired block sequence $\mathbf{F}=(f_i)_{i=0}^{m-1}$ is defined as follows.
  For every $i=0,...,m-1$, we set
  \[f_i=\sum_{q=-(k-1)}^{k-1}(k-|q|)\cdot\chi^n_{s_{j_i+q}},\]
  where $j_i=i(2k-1)+k-1$. It is easy to see that $\mathbf{F}$ belongs to $\mathrm{Block}^m_k(n)$
  and $\langle\mathbf{F}\rangle_k$ is a subset of $X_k(\mathbf{s})$. Thus every $f,f'$ in
  $\langle\mathbf{F}\rangle_k$ of the same type have the same color.
  Moreover, for every $i\in\{0,...,m-1\}$ the function $f_i$ has a ``pyramid'' shape and, in particular, for every $\ee\in\{0,...,k-1\}$ we have that
  \[T^{(\ee)}(f_i)(\max\supp (T^{(\ee)}(f_i)))=T^{(\ee)}(f_i)(\min\supp (T^{(\ee)}(f_i)))=1.\] This easily yields that for every $f,f'$ disjointly
  supported in $\langle\mathbf{F}\rangle_k$ the functions $f$ and $f+T^{(k-1)}(f')$ are of
  the same type, since $T^{(k-1)}(f')$ is of the form $\chi^n_s$ for some $s$ in $\mathrm{NU}(\mathbf{s})$
  disjoint to the support of $f$,
  and therefore of the same color. That is, the color $c$ is insensitive over $\mathbf{F}$.
\end{proof}

\section{Proof of Theorem \ref{positive_Gowers_fin}}
For the proof of Theorem \ref{positive_Gowers_fin} we need the following notation.
For $m, n$ positive integers with $m\mik n$ and $\mathbf{F}=(f_i)_{i=0}^{m-1}$ block sequence  in $X_{k}(n)$, we set
\[\langle \mathbf{F} \rangle_{[k]}=\Big\{\sum_{i=1}^{\ell}T^{\ee_i}(f_{j_i}):\ell\mik m,\; 0\mik j_1<...<j_\ell<m,\; \ee_1,...,\ee_\ell\in\{0,...,k-1\}\Big\}.
\]
\begin{proof}
  [Proof of Theorem \ref{positive_Gowers_fin}]
  We proceed by induction on $k$. The case $k=1$ of the theorem follows
  by Theorem \ref{Mil_Tay_fin} for ``$d=1$''. In particular, it is easy to observe that
  \begin{equation}
    \label{eq01}
    \mathrm{G}(1,m,r)=\mathrm{MT}(1,m,r)
  \end{equation}
  for every choice of positive integers $m$ and $r$.
  Assume that for some integer $k\meg2$, the theorem holds for $k-1$.
  We will establish the validity of the theorem for $k$, by showing that
  \begin{equation}
    \label{eq02}
    \mathrm{G}(k,m,r)\mik \mathrm{MT}(\mathrm{G}(k-1,m,r)(2k-1),2\mathrm{G}(k-1,m,r)(2k-1)-1,r^\alpha),
  \end{equation}
  where $\alpha=\sum_{d=1}^{\mathrm{G}(k-1,m,r)\cdot(2k-1)}{d(k-1)^{d-1}}$, for every choice of positive integers $m$ and $r$. Indeed, let $m,r$ be positive integers and
  set
  $M=\mathrm{G}(k-1,m,r)$. Also let $n$ be a positive integer with
  \begin{equation}
    \label{eq03}
    n\meg \mathrm{MT}(M(2k-1),2M(2k-1)-1,r^\alpha),
  \end{equation}
  where $\alpha=\sum_{d=1}^{M(2k-1)}{d(k-1)^{d-1}}$,
  and $c$ a coloring of the set $X_k(n)$ with $r$ colors.  By \eqref{eq03} and Lemma \ref{insensitivity1} applied
  for ``$m=M$'', we obtain $\mathbf{F'}=(f'_i)_{i=0}^{M-1}$ in $\mathrm{Block}^M_k(n)$ such that
  the coloring $c$ is insensitive over $\mathbf{F'}$.
  We define a map $Q:X_{[k-1]}(M)\to \langle\mathbf{F'}\rangle_{[k]}$ by setting
    \[Q(g)=\sum_{i\in\supp (g)}T^{(k-1-g(i))}(f'_i)\]
    for all $g\in X_{[k-1]}(M)$.
  Let us isolate the following easy to observe properties of the map $Q$.
  \begin{enumerate}
    \item[(a)] If $g\in X_{k-1}(M)$, then $Q(g)\in \langle\mathbf{F'}\rangle_k$.
    \item[(b)] If $(g_i)_{i=0}^{d-1}$ is a block sequence in $X_{k-1}(M)$ then $(Q(g_i))_{i=0}^{d-1}$ is also a block sequence in $\langle\mathbf{F'}\rangle_k$.
    \item[(c)] For every $g\in X_{[k-1]}(M)$, we have  $T(Q(g))=Q(T(g))+\sum_{i\in s}T^{(k-1)}(f'_i)$, where $s=\supp(g)\setminus\supp(T(g))$.
  \end{enumerate}
  Hence, if $(g_i)_{i=0}^{d-1}$ is a block sequence in $X_{k-1}(M)$ and $\ee_0,...,\ee_{d-1} \in
  \{0,...,k-2\}$ with $\min_{0\mik i<d}\ee_i=0$, then
  \[\sum_{i=0}^{d-1}T^{(\ee_i)}\big(Q(g_i)\big)=Q\Big(\sum_{i=0}^{d-1}T^{(\ee_i)}(g_i)\Big)+\sum_{i\in s}T^{(k-1)}(f'_i),\]
  where $s=\bigcup_{i=0}^{d-1}\supp(T^{(\max\{0,\ee_i-1\})}(g_i))\setminus\supp(T^{(\ee_i)}(g_i))$, and therefore, by the insensitivity of the coloring $c$ over $\mathbf{F'}$, we have that
  \begin{equation}
    \label{eq04}
    c\Big(Q\Big(\sum_{i=0}^{d-1}T^{(\ee_i)}g_i\Big)\Big)=c\Big(\sum_{i=0}^{d-1}T^{(\ee_i)}(Q(g_i))\Big).
  \end{equation}
  We define a coloring $\tilde{c}$ of the set $X_{k-1}(M)$ by setting $\tilde{c}(g)=c(Q(g))$ for all $g\in X_{k-1}(M)$. Let us point out that the coloring $\tilde{c}$ is well defined due to property (a) above.
  By the choice of $M$ and the inductive assumption, we obtain a block sequence $\mathbf{G}=(g_i)_{i=0}^{m-1}$
  in $X_{k-1}(M)$ such that the set $\langle\mathbf{G}\rangle_{k-1}$ is monochromatic with respect to $\tilde{c}$. Therefore, setting $\mathbf{F}=(f_i)_{i=0}^{m-1}=(Q(g_i))_{i=0}^{m-1}$, by the definition of $\tilde{c}$ and \eqref{eq04}, we have that the set
  \begin{equation}
    \label{eq05}
    \begin{split}\Big\{\sum_{i=1}^{\ell}T^{\ee_i}(f_{j_i}):\;&
\ell\mik m,\; 0\mik j_1<...<j_\ell<m,\; \\
&\ee_1,...,\ee_\ell\in\{0,...,k-2\}\;\text{and}\;\min_{1\mik i\mik\ell}\ee_i=0\Big\}
\end{split}
  \end{equation}
  is monochromatic with respect to $c$.
 Since $c$ is insensitive over $\mathbf{F}'$ and $\mathbf{F}$ is a block subsequence of $\mathbf{F}'$, by Fact \ref{fact_ins}, we get that that $c$ is insensitive over $\mathbf{F}$. Hence, by \eqref{eq05},
 we get that $\langle\mathbf{F}\rangle_k$ is monochromatic with respect to $c$ and the proof of the
 inductive step is complete.

 Finally, by (ii) of Proposition \ref{prim_rec_properties}, \eqref{eq01}, \eqref{eq02} and the fact that the numbers $\mathrm{MT}(d,m,r)$ are upper bounded by a function belonging to the class $\mathcal{E}^6$ of Grzegorczyk's hierarchy we have that the numbers $\mathrm{G}(k,m,r)$ are upper bounded by a function belonging to the class $\mathcal{E}^7$.
\end{proof}

\section{Proof of Theorem \ref{full_Gowers_fin}}

First let us extend the notion of the support. Let $m,n,k$ be positive integers with $m\mik n$ and
$\mathbf{s}$ in $\mathrm{Block}^m(n)$. We define the support of a function $f\in X_{\pm k}(\mathbf{s})$
with respect to $\mathbf{s}$ as
\[\supp_\mathbf{s}(f)=\supp(g),\]
where $g$ is the unique function in $X_{\pm k}(m)$ such that $f=\mathrm{map}(g,\mathbf{s})$.
For $f,f'$ in $X_{\pm k}(\mathbf{s})$, we will say that the pair $(f,f')$ is of $\mathbf{s}$-displacement
at most one if
\[\begin{split}&\min\supp_\mathbf{s}(f)\mik\min\supp_\mathbf{s}(f')\mik\min\supp_\mathbf{s}(f)+1\text{ and}\\
&\max\supp_\mathbf{s}(f)\mik\max\supp_\mathbf{s}(f')\mik\max\supp_\mathbf{s}(f)+1\end{split}\]
Moreover, for $\mathbf{F}=(f_i)_{i=0}^{d-1}$ and $\mathbf{F}'=(f'_i)_{i=0}^{d-1}$ block sequences in $X_{\pm k}(\mathbf{s})$ of some length $d\mik m$, we will say that the pair $(\mathbf{F},\mathbf{F}')$
is of $\mathbf{s}$-displacement at most one if the pair $(f_i,f'_i)$ is of $\mathbf{s}$-displacement at most one for all $0\mik i<d$. Finally, we will say that a block sequence
$\mathbf{F}=(f_i)_{i=0}^{d-1}$ in $X_{\pm k}(\mathbf{s})$ of some length $d\mik m$ is $\mathbf{s}$-skipped
block if $\max\supp_\mathbf{s}(f_i)+1\mik\min\supp_\mathbf{s}(f_{i+1})$ for all $0\mik i<d-1$. Under this terminology, we have the following immediate fact.
\begin{fact}
  \label{fact2}
  Let $d,m,n,k$ be positive integers with $d\mik m\mik n$ and $\mathbf{s}\in \mathrm{Block}^m(n)$. Also let $\mathbf{F}=(f_i)_{i=0}^{d-1}$ be an $\mathbf{s}$-skipped block sequence in $X_{\pm k}(\mathbf{s})$ and $f'_0,...,f'_{d-1}$ elements of $X_{\pm k}(\mathbf{s})$ such that the pair $(f_i,f'_i)$ is of $\mathbf{s}$-displacement at most one for all $0\mik i<d$. Then the sequence $(f'_i)_{i=0}^{d-1}$ is a
  block sequence in $X_{\pm k}(\mathbf{s})$ and the pair $(\sum_{i=0}^{d-1}f_i,\sum_{i=0}^{d-1}f'_i)$
  is of $\mathbf{s}$-displacement at most one.

  Moreover, if in addition we have $\rho_\infty(f_i,f'_i)\mik1$ for all $i=0,...,d-1$, then we have that
  $\rho_\infty(\sum_{i=0}^{d-1}f_i,\sum_{i=0}^{d-1}f'_i)\mik1$.
\end{fact}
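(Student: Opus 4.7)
The strategy is to unwind everything to the underlying $\mathbf{s}$-supports and to exploit the fact that at any point $p\in\{0,\ldots,n-1\}$ at most one block $s_i$ of $\mathbf{s}$ is active.

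First, to see that $(f'_i)_{i=0}^{d-1}$ is a block sequence, I would combine the $\mathbf{s}$-displacement at most one hypothesis with the $\mathbf{s}$-skipped property to get, for every $0\mik i<d-1$,
\[
\max\supp_\mathbf{s}(f'_i)\mik\max\supp_\mathbf{s}(f_i)+1<\min\supp_\mathbf{s}(f_{i+1})\mik\min\supp_\mathbf{s}(f'_{i+1}),
\]
so that the $\mathbf{s}$-supports of consecutive $f'_i$ use disjoint blocks of $\mathbf{s}$; since $\mathbf{s}$ is itself a block sequence in $\nn$, the ordinary supports $\supp(f'_i)$ are then pairwise separated. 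For the $\mathbf{s}$-displacement of the two sums, the block-sequence property just established makes $\supp_\mathbf{s}(\sum_i f_i)$ the disjoint union of the $\supp_\mathbf{s}(f_i)$, and analogously for $\sum_i f'_i$; hence the extremes are attained at $(f_0,f'_0)$ and at $(f_{d-1},f'_{d-1})$ respectively, and the displacement bounds at these two pairs transfer verbatim to the sums.

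For the $\rho_\infty$ estimate I fix $p\in\{0,\ldots,n-1\}$. If $p$ lies in no block of $\mathbf{s}$ every term of both sums vanishes at $p$, so I may assume $p\in s_{i_p}$ for a unique $i_p$. Since $(f_i)$ and $(f'_i)$ are block sequences, there is at most one $j_0$ with $i_p\in\supp_\mathbf{s}(f_{j_0})$ and at most one $j'_0$ with $i_p\in\supp_\mathbf{s}(f'_{j'_0})$. When both exist, the chain above forces $j_0=j'_0$: for instance $j'_0\meg j_0+1$ would give
\[
i_p\meg\min\supp_\mathbf{s}(f'_{j'_0})\meg\min\supp_\mathbf{s}(f_{j_0+1})>\max\supp_\mathbf{s}(f_{j_0})\meg i_p,
\]
a contradiction, and $j'_0\mik j_0-1$ is excluded by the symmetric use of the skipped-plus-displacement inequality. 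Hence both sums collapse at $p$ to the contribution of a single common pair, yielding
\[
\Big|\sum_{i=0}^{d-1}f_i(p)-\sum_{i=0}^{d-1}f'_i(p)\Big|=|f_{j_0}(p)-f'_{j_0}(p)|\mik\rho_\infty(f_{j_0},f'_{j_0})\mik1,
\]
and the corner cases in which only one of $j_0,j'_0$ exists are handled identically after noting that the absent term vanishes at $p$.

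The only genuinely non-formal step is this single-index reduction at $p$: the skipped gap, combined with the displacement bound, is precisely what prevents two different components of the respective sums from overlapping at the same point. All other steps are direct manipulations with $\supp_\mathbf{s}$ and the definition of $\mathrm{map}(\cdot,\mathbf{s})$.
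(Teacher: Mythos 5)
Your proof is correct, and it fills in an argument that the paper itself leaves entirely unwritten: Fact~\ref{fact2} is introduced only with the phrase ``we have the following immediate fact,'' with no proof supplied. Your single-index reduction at each coordinate $p$, together with the observation that the two extremes of $\supp_\mathbf{s}$ of a block-sum come from the first and last term, is exactly the right way to make the author's ``immediate'' assertion rigorous, and the corner cases are handled correctly once one notes that the absent component satisfies $f_{j_0}(p)=0$ or $f'_{j'_0}(p)=0$ as appropriate, so the displacement bound on a single pair still applies.

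One point worth flagging explicitly: both of your key chains of inequalities use the strict gap $\max\supp_\mathbf{s}(f_i)+1<\min\supp_\mathbf{s}(f_{i+1})$, whereas the paper's definition of $\mathbf{s}$-skipped literally reads $\max\supp_\mathbf{s}(f_i)+1\mik\min\supp_\mathbf{s}(f_{i+1})$. The latter is equivalent to $\max\supp_\mathbf{s}(f_i)<\min\supp_\mathbf{s}(f_{i+1})$, which any block sequence in $X_{\pm k}(\mathbf{s})$ already satisfies, so the literal reading makes the ``skipped'' qualifier vacuous; worse, under that reading Fact~\ref{fact2} is simply false (take $\supp_\mathbf{s}(f_0)=\{0\}$, $\supp_\mathbf{s}(f_1)=\{1\}$, shift $f_0$ right by one). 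The strict version, i.e.\ $\max\supp_\mathbf{s}(f_i)+1<\min\supp_\mathbf{s}(f_{i+1})$, is what the functions $q(k,\ell_i,\mathbf{s})$ actually satisfy (their $\mathbf{s}$-supports leave a gap of exactly two indices), and it is the version your proof requires. You have, in effect, silently corrected a typo in the paper's definition; it would be worth stating this explicitly rather than absorbing it into the inequalities without comment.
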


The following functions posses central role in the proof of Theorem \ref{full_Gowers_fin} and
they are inspired by the approach in \cite{K}.
Again let us fix a triple of positive integers $m,n,k$ with $m\mik n$ and
$\mathbf{s}=(s_i)_{i=0}^{m-1}$ in $\mathrm{Block}^m(n)$.
For every $\delta\in \{1,...,k\}$ and $k-1\mik\ell\mik m-k$,
we define the function
\[q(\delta,\ell,\mathbf{s})=\sum_{j=-(\delta-1)}^{\delta-1}(-1)^j(\delta-|j|)\chi^n_{s_{\ell+j}},\]
while for every non-positive integer $\delta$ by $q(\delta,\ell,\mathbf{s})$ we denote the constant zero function.
The basic properties of the functions $q(\delta,\ell,\mathbf{s})$ are summarized by the following easy to prove
lemma.
\begin{lem}
  \label{properties_q}
  Let $\ell,m,n,\delta,k$ be positive integers with $m\mik n$, $k-1\mik\ell\mik m-k$ and $1\mik\delta\mik k$. Also let $\mathbf{s}\in \mathrm{Block}^m(n)$. Then we have the following.
  \begin{enumerate}
    \item[(i)] $T(q(\delta,\ell,\mathbf{s}))=q(\delta-1,\ell,\mathbf{s})$.
    \item[(ii)] If $\ell<m-k$, then setting $f=-q(\delta,\ell,\mathbf{s})$ and
    $f'=q(\delta,\ell+1,\mathbf{s})$ we have  that
    \begin{enumerate}
      \item[(a)] the pair $(f,f')$ is of $\mathbf{s}$-displacement at most one and
      \item[(b)] $\rho_\infty(f,f')=1$.
    \end{enumerate}
  \end{enumerate}
\end{lem}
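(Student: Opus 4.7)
The plan is to verify both parts directly from the explicit formula, reading off values column by column on the blocks $s_{\ell+j}$. By definition, $q(\delta,\ell,\mathbf{s})$ takes the value $(-1)^j(\delta-|j|)$ on every point of $s_{\ell+j}$ when $|j|\mik\delta-1$ and vanishes elsewhere; in particular it is integer-valued, supported on $s_{\ell-(\delta-1)}\cup\cdots\cup s_{\ell+(\delta-1)}$, with $\supp_\mathbf{s}$ equal to $\{\ell-\delta+1,\ldots,\ell+\delta-1\}$.

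For part (i) the key observation is that $T$ decreases the absolute value of any nonzero entry by exactly one and preserves its sign (with $0\mapsto 0$). Applying this to the value $(-1)^j(\delta-|j|)$ on $s_{\ell+j}$, valid for $|j|\mik\delta-1$, produces $(-1)^j(\delta-1-|j|)$; this vanishes precisely when $|j|=\delta-1$ and otherwise matches the defining expression for $q(\delta-1,\ell,\mathbf{s})$. The boundary case $\delta=1$ is covered by the convention that $q(0,\ell,\mathbf{s})$ is the zero function, together with the fact that $T(\chi^n_{s_\ell})=0$.

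For part (ii)(a) I would simply read off $\supp_\mathbf{s}(f)=\{\ell-\delta+1,\ldots,\ell+\delta-1\}$ and $\supp_\mathbf{s}(f')=\{\ell-\delta+2,\ldots,\ell+\delta\}$, which are well-defined subsets of $\{0,\ldots,m-1\}$ thanks to the hypotheses $k-1\mik\ell$, $\ell<m-k$ and $\delta\mik k$; both the minimum and the maximum of the $\mathbf{s}$-support shift up by exactly one in passing from $f$ to $f'$. For (ii)(b) I would compare $f$ and $f'$ on each column $s_{\ell+j}$ with $j\in\{-\delta+1,\ldots,\delta\}$: on the two extreme columns $j=-\delta+1$ and $j=\delta$ only one of the two functions contributes, and its value there has absolute value $1$; on an interior column a short computation, using $(-1)^{j+1}=(-1)^{j-1}$, expresses the difference as $(-1)^{j+1}(|j-1|-|j|)$, whose absolute value is always $1$ since consecutive integers differ in absolute value by exactly one. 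Combined with the two boundary columns, this gives $\rho_\infty(f,f')=1$. No step is really an obstacle; the only thing requiring care is the parity and sign bookkeeping caused by the minus sign in $f=-q(\delta,\ell,\mathbf{s})$ together with the shift from $\ell$ to $\ell+1$, which is exactly what makes the two contributions align so that their pointwise difference is $\pm 1$ throughout the combined support rather than occasionally cancelling.
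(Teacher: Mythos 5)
Your verification is correct and is the natural direct computation that the paper leaves implicit (the paper labels the lemma "easy to prove" and omits the argument). The column-by-column reading of the pyramid coefficients, the observation that $T$ lowers each nonzero value in absolute value by one while preserving sign, the shift-by-one computation of $\supp_\mathbf{s}$, and the re-indexing that reduces part (ii)(b) to $\bigl||j-1|-|j|\bigr|=1$ all check out, including the endpoint columns and the degenerate case $\delta=1$ via the convention $q(0,\ell,\mathbf{s})=0$.
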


Let us also extend the notion of the positive subspace for block sequences in $X_{\pm k}(n)$.
For $m,n,k$ positive integers with $m\mik n$ and $\mathbf{F}=(f_i)_{i=0}^{m-1}$ in $\mathrm{Block}_{\pm k}^m(n)$ we set
\[\begin{split}\langle \mathbf{F} \rangle_{ k}=\Big\{\sum_{i=1}^{\ell} T^{\ee_i}(f_{j_i}):\;&
\ell\mik m,\; 0\mik j_1<...<j_\ell<m,\; \\
&\ee_1,...,\ee_\ell\in\{0,...,k-1\}\;\text{and}\;\min_{1\mik i\mik\ell}\ee_i=0\Big\}.
\end{split}\]
The proof of Theorem \ref{full_Gowers_fin} makes use of the following lemma, which gathers the
properties of the functions $q(\delta,\ell,\mathbf{s})$ that we shall need.
\begin{lem}
  \label{properties_q2}
  Let $m,n,k$ be positive integers with $2km\mik n$ and $\mathbf{s}\in \mathrm{Block}^{2km}(n)$.
  We set $\mathbf{G}=(g_i)_{i=0}^{m-1}=(q(k,\ell_i,\mathbf{s}))_{i=0}^{m-1}$ where $\ell_i=2ki+k-1$
  for all $0\mik i<m$.
  Let $\mathbf{F}$ be a block sequence in $\langle\mathbf{G}\rangle_k$.
  Then for every $f\in\langle\mathbf{F}\rangle_{\pm k}$ there exist
  $f'\in X_{\pm k}(\mathbf{s})$ and $f''\in\langle\mathbf{F}\rangle_{k}$
  such that
  \begin{enumerate}
    \item[(i)] the functions $f',f''$ are of the same type,
    \item[(ii)] $\rho_\infty(f,f')\mik1$,
    \item[(iii)] the pair $(f,f')$ is of $\mathbf{s}$-displacement at most one and
    \item[(iv)] $\supp(f)=\supp(f'')$.
  \end{enumerate}
\end{lem}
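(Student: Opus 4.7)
The plan is to decompose $f$ into elementary pyramids of the form $\pm q(\eta, \ell_i, \mathbf{s})$ and then to construct $f'$ and $f''$ by a sign-by-sign manipulation of these pyramids. First I would write
\[ f = \sum_{t} \sigma_t T^{\ee_t}(f_{j_t}) \]
with $\sigma_t \in \{+1,-1\}$, shifts $\ee_t \in \{0,\ldots,k-1\}$, indices $j_1 < j_2 < \ldots$, and $\min_t \ee_t = 0$. Then I would further expand each $f_{j_t} \in \langle \mathbf{G}\rangle_k$ as $\sum_{i \in S_t} T^{\delta_{t,i}}(g_i)$, where the $S_t$'s form a block sequence in $\{0,\ldots,m-1\}$. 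Using part (i) of Lemma \ref{properties_q} and the identity $g_i = q(k, \ell_i, \mathbf{s})$, this rewrites $f$ as a signed sum of pyramids $\sum_p \sigma_p q(\eta_p, \ell_{i_p}, \mathbf{s})$, indexed by those pairs $p = (t,i)$ with $\eta_p = k - \delta_{t,i} - \ee_t \meg 1$; crucially, all the $i_p$'s are distinct because the $S_t$'s are disjoint.

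Next I would define $f'' \in \langle \mathbf{F}\rangle_k$ by simply stripping the signs: $f'' = \sum_t T^{\ee_t}(f_{j_t}) = \sum_p q(\eta_p, \ell_{i_p}, \mathbf{s})$. Since $\min_t \ee_t = 0$, this is a valid element of $\langle \mathbf{F}\rangle_k$. Property (iv) is then immediate, for negation preserves support and the pyramids have pairwise disjoint $\mathbf{s}$-supports thanks to the separation $\ell_{i_{p+1}}-\ell_{i_p} \meg 2k$ and the fact that each pyramid occupies at most $2k-1$ consecutive $\mathbf{s}$-indices. I would then define $f'$ by replacing, in the signed expansion of $f$, each negatively-signed pyramid $-q(\eta_p, \ell_{i_p}, \mathbf{s})$ by its ``shifted positive twin'' $q(\eta_p, \ell_{i_p}+1, \mathbf{s})$ while leaving the positive ones untouched.

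For properties (ii) and (iii) I plan to invoke Fact \ref{fact2}: the signed pyramids $(\sigma_p q(\eta_p, \ell_{i_p}, \mathbf{s}))_p$ form an $\mathbf{s}$-skipped block sequence by the length estimate above, and pairing each with the corresponding summand of $f'$, Lemma \ref{properties_q}(ii) yields a termwise $\mathbf{s}$-displacement at most one and $\rho_\infty$-distance at most one (with positive $\sigma_p$ giving equality trivially). Fact \ref{fact2} then assembles these into the required global conclusions on $f$ and $f'$.

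The main obstacle will be property (i). The idea is that both $f'$ and $f''$, read in increasing order of $\mathbf{s}$-indices, present the \emph{same} sequence of nonzero values, namely the concatenation of the alternating-sign arrays $(-1)^j(\eta_p - |j|)$ attached to each pyramid; these arrays depend on $\eta_p$ alone and are insensitive to the shift of the center. Within any single pyramid, consecutive values differ in sign, so no merging of bsupp blocks occurs inside a pyramid, and at the boundary between consecutive pyramids the last value $(-1)^{\eta_p - 1}$ and the first value $(-1)^{\eta_{p+1}-1}$ coincide exactly when $\eta_p \equiv \eta_{p+1} \pmod{2}$, a condition independent of whether the pyramids have been shifted. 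Thus the merging pattern, and hence the type, is identical for $f'$ and $f''$.
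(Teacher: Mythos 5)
Your proposal is correct and matches the paper's own proof: the same elementary-pyramid decomposition, the same sign-stripping to produce $f''$, the same right-shift of the negatively-signed pyramids to produce $f'$, and the same appeal to Lemma \ref{properties_q}(ii) together with Fact \ref{fact2}. If anything your treatment of (i) is more careful than the paper's, which only notes that the shifted sequence remains block; one small thing worth adding to make the write-up complete is the explicit observation that $f'\in X_{\pm k}(\mathbf{s})$, which follows because $\min_t\ee_t=0$ and, for that $t$, $\min_{i\in S_t}\delta_{t,i}=0$, so some $\eta_p$ equals $k$.
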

\begin{proof}
  Let $d$ be the length of $\mathbf{F}$ and $\mathbf{F}=(f_j)_{j=0}^{d-1}$.
  Then for every $j=0,...,d-1$ there exist a subset $s_j$ of $\{0,..,m-1\}$ and a family $(\ee_i)_{i\in s_j}$ of elements from the set $\{0,...,k-1\}$ such that
  \begin{equation}
    \label{eq06}
    \min_{i\in s_j}\ee_i=0
  \end{equation}
  and $f_j=\sum_{i\in s_j}T^{(\ee_i)}(g_i)$.
  By the definition of $\mathbf{G}$ and part (i) of Lemma \ref{properties_q}, we get that
  \begin{equation}
    \label{eq07}
    f_j=\sum_{i\in s_j}q(k-\ee_i,\ell_i,\mathbf{s})
  \end{equation}
  for all $j=0,...,d-1$.

  Let $f\in\langle\mathbf{F}\rangle_{\pm k}$. Then there exist a subset $\tilde{s}$ of $\{0,...,d-1\}$,
  a family $(a_j)_{j\in \tilde{s}}$ of elements from $\{-1,1\}$ and a family $(\tilde{\ee}_j)_{j\in \tilde{s}}$
  of elements from $\{0,...,k-1\}$ such that
  \begin{equation}
    \label{eq08}
    \min_{j\in \tilde{s}}\tilde{\ee}_j=0
  \end{equation}
  and
  \begin{equation}
    \label{eq09}
    f=\sum_{j\in\tilde{s}}a_jT^{(\tilde{\ee}_j)}(f_j).
  \end{equation}
  Since $\mathbf{F}$ is a block subsequence of $\mathbf{G}$, we have that $(s_j)_{j=0}^{d-1}$ is
  a block sequence of nonempty finite subsets of $\nn$ and, in particular, consists of pairwise disjoint sets.
  Thus, setting $p$ to be the cardinality of the set $\cup_{j\in \tilde{s}}s_j$ and $\cup_{j\in \tilde{s}}s_j=\{i_0<...<i_{p-1}\}$, we have that for every $x=0,...,p-1$ there exists unique $j_x$ in $\tilde{s}$ such that $i_x\in s_{j_x}$. By equation \eqref{eq09} we have that
  \begin{equation}
    \label{eq10}
    \begin{split}
      f&=\sum_{j\in\tilde{s}}a_jT^{(\tilde{\ee}_j)}(f_j)
      \stackrel{\eqref{eq07}}{=}\sum_{j\in\tilde{s}}a_jT^{(\tilde{\ee}_j)}\Big(\sum_{i\in s_j}q(k-\ee_i,\ell_i,\mathbf{s})\Big) \\
      &=\sum_{j\in\tilde{s}}a_j\sum_{i\in s_j}T^{(\tilde{\ee}_j)}\big(q(k-\ee_i,\ell_i,\mathbf{s})\big)
      \;\;\;\;\;\;\;\;\;\;\; (\text{Part (i) of Lemma \ref{properties_q}})\\
      &=\sum_{j\in\tilde{s}}\sum_{i\in s_j}a_jq(k-\ee_i-\tilde{\ee}_j,\ell_i,\mathbf{s})
      =\sum_{x=0}^{p-1}a_{j_x}q(k-\ee_{i_x}-\tilde{\ee}_{j_x},\ell_{i_x},\mathbf{s}).
    \end{split}
  \end{equation}
  For every $x=0,...,p-1$ we set $\ell'_{i_x}=\ell_{i_x}$ if $a_{j_x}=1$ and $\ell'_{j_x}=\ell_{j_x}+1$ if $a_{j_x}=-1$. The sequence $(q(k,\ell_i,\mathbf{s}))_{i=0}^{m-1}$, by its definition, is $\mathbf{s}$-skipped block and therefore the sequence $(q(k-\ee_{i_x}-\tilde{\ee}_{j_x},\ell_{i_x},\mathbf{s}))_{x=0}^{p-1}$ is $\mathbf{s}$-skipped block.
  By part (ii) of Lemma \ref{properties_q}, for every $x=0,...,p-1$  we have that the pair
  \[(q(k-\ee_{i_x}-\tilde{\ee}_{j_x},\ell_{i_x},\mathbf{s}), q(k-\ee_{i_x}-\tilde{\ee}_{j_x},\ell'_{i_x},\mathbf{s}))\]
  is of $\mathbf{s}$-displacement at most one and
  \[\rho_\infty(q(k-\ee_{i_x}-\tilde{\ee}_{j_x},\ell_{i_x},\mathbf{s}), q(k-\ee_{i_x}-\tilde{\ee}_{j_x},\ell'_{i_x},\mathbf{s}))\mik1.\]
  Thus, by Fact \ref{fact2}, we have that $(q(k-\ee_{i_x}-\tilde{\ee}_{j_x},\ell'_{i_x},\mathbf{s}))_{x=0}^{p-1}$ is a block sequence and setting
  \[f'=\sum_{x=0}^{p-1}q(k-\ee_{i_x}-\tilde{\ee}_{j_x},\ell'_{i_x},\mathbf{s})\]
  we have that $\rho_\infty(f,f')\mik1$ and the pair $(f,f')$
  is of $\mathbf{s}$-displacement at most one, i.e., parts (ii) and (iii) of the lemma are satisfied. Moreover,
  again by the fact that the sequence $(q(k-\ee_{i_x}-\tilde{\ee}_{j_x},\ell'_{i_x},\mathbf{s}))_{x=0}^{p-1}$ is block and by part (i) of Lemma \ref{properties_q}, we have that
  \[f'=\sum_{x=0}^{p-1}q(k-\ee_{i_x}-\tilde{\ee}_{j_x},\ell'_{i_x},\mathbf{s})
    =\sum_{x=0}^{p-1}T^{(\ee_{i_x}+\tilde{\ee}_{j_x})}q(k,\ell'_{i_x},\mathbf{s}).
  \]
  Hence, by equations \eqref{eq06} and \eqref{eq08}, we have that $f'\in X_{\pm k}(\mathbf{s})$.

  We set
  \[f''=\sum_{x=0}^{p-1}q(k-\ee_{i_x}-\tilde{\ee}_{j_x},\ell_{i_x},\mathbf{s}).\]
  By \eqref{eq10} and the definition of $f''$, we have that part (iv) of the lemma is satisfied.
  Recalling that the sequence $(q(k-\ee_{i_x}-\tilde{\ee}_{j_x},\ell'_{i_x},\mathbf{s}))_{x=0}^{p-1}$
  is block, we observe that the functions $f'$ and $f''$ are of the same type and therefore part (i) of the
  lemma is satisfied.
  Finally, we have that
  \begin{equation}
  \label{eq11}
  \begin{split}
    f''&=\sum_{x=0}^{p-1}q(k-\ee_{i_x}-\tilde{\ee}_{j_x},\ell_{i_x},\mathbf{s})
    =\sum_{j\in\tilde{s}}\sum_{i\in s_j}q(k-\ee_i-\tilde{\ee}_j,\ell_i,\mathbf{s})\\
    &=\sum_{j\in\tilde{s}}\sum_{i\in s_j}T^{(\tilde{\ee}_j)}\big(q(k-\ee_i,\ell_i,\mathbf{s})\big)
    =\sum_{j\in\tilde{s}}T^{(\tilde{\ee}_j)}\Big(\sum_{i\in s_j}q(k-\ee_i,\ell_i,\mathbf{s})\Big)\\
    &\stackrel{\eqref{eq07}}{=}\sum_{j\in\tilde{s}}T^{(\tilde{\ee}_j)}(f_j),
  \end{split}
  \end{equation}
  where the third equality holds by part (i) of Lemma \ref{properties_q}.
  By \eqref{eq08} and \eqref{eq11}, we have that $f''\in\langle\mathbf{F}\rangle_k$. The proof is complete.
\end{proof}

Finally, identical arguments to the ones used in the proof of Lemma \ref{canon_color1} yield the following.
\begin{lem}
  \label{canon_color2}
  Let $m,n,k,r$ be positive integers, with $n\meg \mathrm{MT}(m,2m-1,r^\beta)$, where $\beta=\sum_{d=1}^m2d(2k-1)^{d-1}$. Then for every coloring of the set $X_{\pm k}(n)$ with $r$ colors, there
  exists  $\mathbf{s}$ in $\mathrm{Block}^m(n)$ such that every $f,f'$ in $X_{\pm k}(\mathbf{s})$ of the same type have the same color.
\end{lem}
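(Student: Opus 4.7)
The plan is to reproduce the proof of Lemma \ref{canon_color1} essentially verbatim, with $X_k$ replaced by $X_{\pm k}$ throughout and with the type-count bound $\alpha$ replaced by the signed analogue $\beta$. The only new ingredient worth isolating is the counting estimate: I need to show that the set $\mathcal{T}$ of types over $\pm k$ of length at most $m$ satisfies $|\mathcal{T}| \mik \beta$. A type $\varphi$ of length $d$ over $\pm k$ is a map $\varphi:\{0,\ldots,d-1\}\to\{-k,\ldots,k\}$ with $\varphi(i)\neq 0$, $\varphi(i)\neq\varphi(i+1)$, and $|\varphi(i)|=k$ for some $i$. I would bound these by first choosing a position $i\in\{0,\ldots,d-1\}$ at which $\pm k$ is attained (giving $d$ choices) and the sign at that position ($2$ choices), then observing that the remaining $d-1$ entries each admit at most $2k-1$ choices thanks to the consecutive-distinctness constraint (the $2k$ nonzero values in $\{-k,\ldots,k\}$ minus the value at the preceding coordinate). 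This yields at most $2d(2k-1)^{d-1}$ types of length $d$, so summing over $1\mik d\mik m$ gives $|\mathcal{T}|\mik\beta$.

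With this counting in hand, I would set $\mathcal{X}=\{1,\ldots,r\}^{\mathcal{T}}$, noting $|\mathcal{X}|\mik r^\beta$. Given a coloring $c:X_{\pm k}(n)\to\{1,\ldots,r\}$, I define the auxiliary coloring $\tilde{c}:\mathrm{Block}^m(n)\to\mathcal{X}$ exactly as in the proof of Lemma \ref{canon_color1}: for $\mathbf{t}\in\mathrm{Block}^m(n)$ put $\tilde{c}(\mathbf{t})=q_{\mathbf{t}}$, where $q_{\mathbf{t}}(\varphi)=c(\mathrm{map}(\varphi,\mathbf{t}|d))$ for every $\varphi\in\mathcal{T}$ of length $d$. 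Since $n\meg\mathrm{MT}(m,2m-1,r^\beta)$, an application of Theorem \ref{Mil_Tay_fin} supplies a block sequence $\mathbf{s}'\in\mathrm{Block}^{2m-1}(n)$ such that $\tilde{c}$ is constant on $\mathrm{Block}^m(\mathbf{s}')$, i.e.\ there is a fixed $q\in\mathcal{X}$ with $q_{\mathbf{t}}=q$ for every $\mathbf{t}\in\mathrm{Block}^m(\mathbf{s}')$.

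I would then take $\mathbf{s}=\mathbf{s}'|m$ and verify the conclusion. For any pair $f,f'\in X_{\pm k}(\mathbf{s})$ sharing a common type $\varphi$ of length $d\mik m$, the sequences $\mathrm{bsupp}(f)$ and $\mathrm{bsupp}(f')$ are block subsequences of $\mathbf{s}$ of length $d$. Because $\mathbf{s}$ is the initial $m$-segment of $\mathbf{s}'$, which has length $2m-1$, there remain at least $m-1\meg m-d$ additional blocks in $\mathbf{s}'$ strictly after $\mathbf{s}$, and I can end-extend each of $\mathrm{bsupp}(f),\mathrm{bsupp}(f')$ to sequences $\mathbf{t},\mathbf{t}'\in\mathrm{Block}^m(\mathbf{s}')$. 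Then $f=\mathrm{map}(\varphi,\mathbf{t}|d)$ and $f'=\mathrm{map}(\varphi,\mathbf{t}'|d)$, so the monochromaticity of $\tilde{c}$ on $\mathrm{Block}^m(\mathbf{s}')$ gives
\[c(f)=q_{\mathbf{t}}(\varphi)=q(\varphi)=q_{\mathbf{t}'}(\varphi)=c(f'),\]
as required. There is no genuine obstacle here; the entire argument is mechanical once the type-count $\beta$ is established, and that estimate is the only place where the signed setting produces different arithmetic than in Lemma \ref{canon_color1}.
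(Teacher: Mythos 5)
Your proposal is correct and is exactly the argument the paper intends: the paper explicitly states that Lemma \ref{canon_color2} follows by ``identical arguments to the ones used in the proof of Lemma \ref{canon_color1},'' and you reproduce that proof verbatim with $X_{\pm k}$ in place of $X_k$, supplying the one genuinely new ingredient, namely the type-count bound $2d(2k-1)^{d-1}$ for types over $\pm k$ of length $d$, which is the signed analogue of the bound $d(k-1)^{d-1}$ used implicitly in Lemma \ref{canon_color1}.
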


Actually, we prove the following slightly stronger version of Theorem \ref{full_Gowers_fin}. For its proof, we will need some notation. For $m,n,k$ positive integers with $m\mik n$ and $\mathbf{F}=(f_i)_{i=0}^{m-1}$ in $\mathrm{Block}_{\pm k}^m(n)$ we set
\[\begin{split}\langle \mathbf{F} \rangle_{[k]}=\Big\{\sum_{i=1}^{\ell} T^{\ee_i}(f_{j_i}):
\ell\mik m,\; &0\mik j_1<...<j_\ell<m,\; \\
&\text{and}\;\ee_1,...,\ee_\ell\in\{0,...,k-1\}\Big\}.
\end{split}\]
\begin{thm}
  \label{strong_full_gowers}
  Let $m,n,k,r$ be positive integers with $n\meg\mathrm{MT}(2kM,4kM-1,r^\beta)$, where $M=\mathrm{G}(k,m,r)$ and $\beta=\sum_{d=1}^{2kM}2d(2k-1)^{d-1}$. Then for every coloring $c:X_{\pm k}(n)\to\{1,...,r\}$ there exist $\mathbf{s}\in\mathrm{Block}^{2kM}(n)$ and
  $\mathbf{F}\in\mathrm{Block}_{\pm k}^m(\mathbf{s})$ such that
  \begin{enumerate}
    \item[(i)] $\mathbf{F}$ is $\mathbf{s}$-skipped block and
    \item[(ii)] there exists $i_0$ in $\{1,...,r\}$ such that for every $f\in\langle\mathbf{F}\rangle_{\pm k}$ there exists $f'\in X_{\pm k}(\mathbf{s})$
        such that $c(f')=i_0$, $\rho_\infty(f,f')\mik1$ and the pair $(f,f')$ is of $\mathbf{s}$-displacement at most one.
  \end{enumerate}
        In particular, we have that $\langle\mathbf{F}\rangle_{\pm k}$ is approximately monochromatic.
\end{thm}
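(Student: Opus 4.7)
The plan is to canonicalize the coloring $c$ on types first, then transfer the problem onto a copy of $X_k(M)$ embedded inside $\langle\mathbf{G}\rangle_k$ via the generators of Lemma \ref{properties_q2}, apply Theorem \ref{positive_Gowers_fin} there, and finally invoke Lemma \ref{properties_q2} to pass from the monochromaticity of the positive subspace $\langle\mathbf{F}\rangle_k$ to the approximate monochromaticity of $\langle\mathbf{F}\rangle_{\pm k}$.

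Since $n\meg\mathrm{MT}(2kM,4kM-1,r^\beta)$ with $M=\mathrm{G}(k,m,r)$, Lemma \ref{canon_color2} yields an $\mathbf{s}\in\mathrm{Block}^{2kM}(n)$ on which any two functions of $X_{\pm k}(\mathbf{s})$ of the same type share the color under $c$. Fix this $\mathbf{s}$ and set $g_i=q(k,\ell_i,\mathbf{s})$ with $\ell_i=2ki+k-1$ for $0\mik i<M$, exactly as in Lemma \ref{properties_q2}. Define the ``evaluation map'' $\Phi:X_k(M)\to\langle\mathbf{G}\rangle_k$ by
\[\Phi(h)=\sum_{i\in\supp(h)}T^{(k-h(i))}(g_i).\]
It is a bijection, and Lemma \ref{properties_q}(i) combined with the nilpotency $T^{(k)}(g_i)=q(0,\ell_i,\mathbf{s})=0$ gives $T(\Phi(h))=\Phi(T(h))$ for every $h\in X_k(M)$. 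Hence $\Phi$ respects the positive-subspace structure: for any block sequence $\mathbf{H}=(h_j)_{j=0}^{m-1}$ in $X_k(M)$ and $\mathbf{F}:=(\Phi(h_j))_{j=0}^{m-1}$ one has $\Phi(\langle\mathbf{H}\rangle_k)=\langle\mathbf{F}\rangle_k$.

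Apply Theorem \ref{positive_Gowers_fin} to the pullback coloring $\tilde c(h):=c(\Phi(h))$ on $X_k(M)$: since $M=\mathrm{G}(k,m,r)$, it yields a block sequence $\mathbf{H}=(h_j)_{j=0}^{m-1}$ in $X_k(M)$ and a color $i_0$ with $c(\Phi(h'))=i_0$ for every $h'\in\langle\mathbf{H}\rangle_k$. Setting $\mathbf{F}=(\Phi(h_j))_{j=0}^{m-1}$, the $\mathbf{s}$-support of each $\Phi(h_j)$ lies in $\bigcup_{i\in\supp(h_j)}\{2ki,\dots,2ki+2(k-1)\}$, so consecutive $\Phi(h_j)$'s always leave an $\mathbf{s}$-block of index $2k\max\supp(h_j)+(2k-1)$ between them; thus $\mathbf{F}$ is $\mathbf{s}$-skipped block, proving (i). By the homomorphism property, $\langle\mathbf{F}\rangle_k=\Phi(\langle\mathbf{H}\rangle_k)$ is $c$-monochromatic of color $i_0$.

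For (ii), fix any $f\in\langle\mathbf{F}\rangle_{\pm k}$. Since $\mathbf{F}$ is a block subsequence of $\mathbf{G}$, Lemma \ref{properties_q2} produces $f'\in X_{\pm k}(\mathbf{s})$ and $f''\in\langle\mathbf{F}\rangle_k$ of the same type with $\rho_\infty(f,f')\mik 1$ and $(f,f')$ of $\mathbf{s}$-displacement at most one. Monochromaticity of $\langle\mathbf{F}\rangle_k$ forces $c(f'')=i_0$, and the type-canonicalization from the first step forces $c(f')=c(f'')=i_0$ since $f',f''\in X_{\pm k}(\mathbf{s})$ share the same type; this gives (ii). The principal technical point is the homomorphism-like property of $\Phi$, which rests on the nilpotency $T^{(k)}(g_i)=0$ and the shift identity $T^{(\ee)}(q(\delta,\ell_i,\mathbf{s}))=q(\delta-\ee,\ell_i,\mathbf{s})$; once this is in hand, the rest is an assembly of Lemmas~\ref{canon_color2} and~\ref{properties_q2} with Theorem~\ref{positive_Gowers_fin}.
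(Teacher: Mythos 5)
Your proof is correct and follows essentially the same route as the paper: canonicalize by type via Lemma \ref{canon_color2}, push $X_k(M)$ into $\langle\mathbf{G}\rangle_k$ through the map $\Phi$ (which is the paper's $Q$), pull back the coloring, apply Theorem \ref{positive_Gowers_fin}, and finish with Lemma \ref{properties_q2}. The only cosmetic difference is that you verify the $\mathbf{s}$-skipped block property by direct index computation rather than by noting $\mathbf{F}$ is a block subsequence of the $\mathbf{s}$-skipped block sequence $\mathbf{G}$.
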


\begin{proof}
  Let $c:X_{\pm k}(n)\to\{1,...,r\}$ be a coloring. Since $n\meg\mathrm{MT}(2kM,4kM-1,r^\beta)$, by Lemma \ref{canon_color2} applied for ``$m=2kM$'',
  we have that there
  exists an $\mathbf{s}$ in $\mathrm{Block}^{2kM}(n)$ such that every $f,f'$ in $X_{\pm k}(\mathbf{s})$ of the same type have the same color.
  We set $\mathbf{G}=(g_i)_{i=0}^{M-1}=(q(k,\ell_i,\mathbf{s}))_{i=0}^{M-1}$ where $\ell_i=2ki+k-1$
  for all $0\mik i<M$ as in Lemma \ref{properties_q2}. Observe that $\mathbf{G}$ is $\mathbf{s}$-skipped block.

  We define a map $Q:X_{[k]}(M)\to\langle\mathbf{G}\rangle_{[k]}$ by setting
  \[Q(g)=\sum_{i=0}^{M-1}T^{(k-h(i))}(g_i)\]
  for all $g$ in $X_{[k]}(M)$. It is easy to observe that $Q$ is 1-1 and onto, while $Q$ restricted to
  $X_k(M)$ is 1-1 and onto $\langle\mathbf{G}\rangle_k$. Moreover, the image of
  every block sequence in $X_{k}(M)$ is a block sequence in $\langle\mathbf{G}\rangle_{k}$ and the pre-image of every block sequence in $\langle\mathbf{G}\rangle_{k}$
  is a block sequence in $X_{k}(M)$. Finally,
  for every $g$ in $X_{[k]}(M)$ we have that $Q(T(g))=T(Q(g))$. Under theses remarks, for every block sequence $\mathbf{H}=(h_i)_{i=0}^{d-1}$ in $X_{k}(M)$, setting $\mathbf{F}=(Q(h_i))_{i=0}^{d-1}$, we have that
  \begin{equation}
    \label{eq12}
    Q[\langle\mathbf{H}\rangle_k]=\langle\mathbf{F}\rangle_k.
  \end{equation}
  We define a coloring $\tilde{c}:X_{k}(M)\to\{1,...,r\}$ by
  setting $\tilde{c}(g)=c(Q(g))$ for all $g\in X_{k}(M)$. By the definition of $M$ and applying Theorem \ref{positive_Gowers_fin}
  we obtain a block sequence $\mathbf{H}=(h_i)_{i=0}^{m-1}$ in $X_{k}(M)$ of length $m$ such that the set $\langle\mathbf{H}\rangle_k$ is $\tilde{c}$-monochromatic.

  We set $\mathbf{F}=(Q(h_i))_{i=0}^{n-1}$. Then $\mathbf{F}$ is a block subsequence of $\mathbf{G}$ of length $m$. Moreover, since $\mathbf{G}$ is $\mathbf{s}$-skipped block, we
  have that $\mathbf{F}$ is also $\mathbf{s}$-skipped block, that is, part (i) of the theorem is
  satisfied.
  By the definition of $\tilde{c}$, the fact that $\langle\mathbf{H}\rangle_k$ is $\tilde{c}$-monochromatic and
  \eqref{eq12} we have that the set $\langle\mathbf{F}\rangle_k$ is $c$-monochromatic. Let $i_0$ in $\{1,...,r\}$ such that $c(f)=i_0$ for all $f$ in $\langle\mathbf{F}\rangle_k$.

  In order to check the validity of part (ii) of the theorem, we fix $f\in\langle\mathbf{F}\rangle_{\pm k}$.
  By the choice of $\mathbf{G}$, the fact that $\mathbf{F}$ is a block subsequence of $\mathbf{G}$ and
  Lemma \ref{properties_q2}, we have that there exist
  $f'\in X_{\pm k}(\mathbf{s})$ and $f''\in\langle\mathbf{F}\rangle_{k}$
  such that
  \begin{enumerate}
    \item[(i)] the functions $f',f''$ are of the same type,
    \item[(ii)] $\rho_\infty(f,f')\mik1$ and
    \item[(iii)] the pair $(f,f')$ is of $\mathbf{s}$-displacement at most one.
  \end{enumerate}
By (i) and the choice of $\mathbf{s}$, we have  $c(f')=c(f'')=i_0$. The proof is complete.
\end{proof}
Clearly, Theorem \ref{strong_full_gowers} yields Theorem \ref{full_Gowers_fin} and, in particular, for
every choice of positive integers $m,k,r$ we have that
\[\mathrm{G}_\pm(k,m,r)\mik\mathrm{MT}(2kM,4kM-1,r^\beta),\]
where $M=\mathrm{G}(k,m,r)$ and $\beta=\sum_{d=1}^{2kM}2d(2k-1)^{d-1}$. Therefore, since the numbers $\mathrm{MT}(d,m,r)$ are upper bounded by a function belonging to the class $\mathcal{E}^6$ of Grzegorczyk's hierarchy and the numbers $\mathrm{G}(k,m,r)$ are upper bounded by a function belonging to the class $\mathcal{E}^7$, by (ii) of Proposition \ref{prim_rec_properties}, we have that the numbers $\mathrm{G}_\pm(k,m,r)$ are upper bounded by a function belonging to the class $\mathcal{E}^7$.

\section{Multidimensional Versions}\label{section_mult_versions}
In this section we provide multidimensional versions of the Theorems \ref{positive_Gowers_fin} and \ref{full_Gowers_fin}. A standard iteration of Theorem \ref{positive_Gowers_fin} yields the following.

\begin{thm}
  \label{mult_positive_Gowers_fin}
  For every quadrat of positive integers $k,d,m,r$ with $d\mik m$, there exists a positive integer $n_0$ satisfying the
  following property. For every integer $n\meg n_0$ and every coloring of the set $\mathrm{Block}_k^d(n)$ with $r$ colors, there exists a block sequence $\mathbf{F}$ in $X_k(n)$ of length $m$ such that the set $\mathrm{Block}_k^d( \mathbf{F} )$ is monochromatic.
  We denote the least $n_0$ satisfying the above property by $\mathrm{MG}(k,d,m,r)$.

  Moreover, the numbers $\mathrm{MG}(k,d,m,r)$ are upper bounded by a primitive recursive function belonging to the class $\mathcal{E}^9$ of Grzegorczyk's hierarchy.
\end{thm}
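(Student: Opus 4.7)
The plan is to proceed by induction on $d$, with the base case $d=1$ being immediate: $\mathrm{Block}_k^1(\mathbf{F})$ is canonically identified with $\langle\mathbf{F}\rangle_k$ via the bijection $(f)\mapsto f$, so $\mathrm{MG}(k,1,m,r)=\mathrm{G}(k,m,r)$ and Theorem \ref{positive_Gowers_fin} already supplies the required bound (in $\mathcal{E}^7\subseteq\mathcal{E}^9$).

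For the inductive step, assume the bound for $d-1$ and let $c\colon\mathrm{Block}_k^d(n)\to\{1,\ldots,r\}$ be a coloring. The strategy is to reduce to dimension $d-1$ by producing a block sequence $\mathbf{H}=(h_0,\ldots,h_{M-1})$ on which $c$ is \emph{last-coordinate insensitive}, meaning that $c(g_1,\ldots,g_{d-1},g_d)$ depends only on the $(d-1)$-prefix $(g_1,\ldots,g_{d-1})$. I would construct $\mathbf{H}$ stage by stage, maintaining a residual block sequence $\mathbf{W}_i$ lying block-past $h_{i-1}$. At stage $i$, having chosen $h_0,\ldots,h_{i-1}$ and $\mathbf{W}_{i-1}$, I enumerate all $(d-1)$-length block subsequences $\mathbf{g}$ of $(h_0,\ldots,h_{i-1})$ and, for each such $\mathbf{g}$, interpret $g\mapsto c(\mathbf{g},g)$ as an $r$-coloring of $\langle\mathbf{W}_{i-1}\rangle_k$. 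I apply Theorem \ref{positive_Gowers_fin} successively, once per prefix $\mathbf{g}$, to refine the workspace to a long block sequence on which all these auxiliary colorings are simultaneously constant; then I take $h_i$ to be the first element of the refined workspace and let $\mathbf{W}_i$ be the tail. After $M:=\mathrm{MG}(k,d-1,m,r)$ such stages the induced coloring $c'(\mathbf{g}):=c(\mathbf{g},g)$ is well-defined on $\mathrm{Block}_k^{d-1}(\mathbf{H})$. The inductive hypothesis applied to $c'$ yields a block sequence $\mathbf{F}$ of length $m$ with $\mathrm{Block}_k^{d-1}(\mathbf{F})$ monochromatic for $c'$; since $\mathbf{F}\in\mathrm{Block}_k^m(\mathbf{H})$ and $c$ is last-coordinate insensitive on $\mathbf{H}$, this forces $\mathrm{Block}_k^d(\mathbf{F})$ to be $c$-monochromatic.

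The main obstacle is to track the parameters carefully so that the final bound remains primitive recursive of the claimed complexity. The number of prefixes enumerated at stage $i$ grows polynomially in $i$, and each contributes a separate call to Theorem \ref{positive_Gowers_fin} with $r$ colors, so the inner workspace recursion has the form $L_0=m$ and $L_{i+1}=\mathrm{G}\big(k,\,p(i,d,k)\cdot L_i,\,r\big)$ for some explicit polynomial $p$. Iterating this inner recursion $M$ times yields the bound on the starting length, and since $\mathrm{G}\in\mathcal{E}^7$ this inner primitive recursion lands in $\mathcal{E}^8$ by Proposition \ref{prim_rec_properties}(ii); the outer primitive recursion on $d$ then places $\mathrm{MG}$ in $\mathcal{E}^9$, as claimed.
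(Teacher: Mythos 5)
Your approach is essentially the paper's: induction on $d$, a stage-by-stage construction producing a block sequence on which the coloring depends only on the first $d-1$ coordinates, and then a reduction to dimension $d-1$. Two points need attention.

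First, there is a genuine (though easily repaired) gap in the step ``the induced coloring $c'(\mathbf{g}):=c(\mathbf{g},g)$ is well-defined on $\mathrm{Block}_k^{d-1}(\mathbf{H})$.'' If a $(d-1)$-prefix $\mathbf{g}$ uses the last element $h_{M-1}$, then there is no admissible $g$ block-after $\mathbf{g}$ inside $\mathbf{H}$, so $c'(\mathbf{g})$ is undefined and the phrase ``well-defined on $\mathrm{Block}_k^{d-1}(\mathbf{H})$'' is simply false as stated. The paper fixes this by building $\mathbf{G}\in\mathrm{Block}_k^{M+1}(n)$ (one extra element), setting $\mathbf{G}^*=\mathbf{G}|M$, and defining $\tilde c(\mathbf{H})=c(\mathbf{H}^\con(g_M))$ using the last element $g_M$ as a fixed marker; the inductive hypothesis is applied to get $\mathbf{F}^*\in\mathrm{Block}_k^{m-1}(\mathbf{G}^*)$, and then $\mathbf{F}=\mathbf{F}^{*\con}(g_M)$. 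You could alternatively extend $c'$ arbitrarily to the problematic prefixes and observe that every $d$-tuple in $\mathrm{Block}_k^d(\mathbf{F})$ has a prefix that avoids the last element, but this needs to be said explicitly; as written your sketch asserts something untrue.

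Second, your recursion for the workspace length does not match the procedure you describe. You say you ``apply Theorem \ref{positive_Gowers_fin} successively, once per prefix,'' which would force the bound $L_{i}\meg \mathrm{G}^{(P_i)}(k,\cdot,r)$ with $P_i$ the number of prefixes at stage $i$ (a nested iteration of $\mathrm{G}$), not the single call $L_{i+1}=\mathrm{G}(k,p(i,d,k)\cdot L_i,r)$ you wrote. The paper avoids the nesting entirely by packaging all prefixes into a single product coloring: at each stage it makes one application of Theorem \ref{positive_Gowers_fin} with $r^{(k+1)^{d\ell_1}}$ colors (Lemma \ref{stab_d}), rather than $P_i$ applications with $r$ colors. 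This is cleaner, gives the recursion $N_{p-1}=\mathrm{G}(k,N_p+1,r^{(k+1)^{d\ell}})$ that stays in $\mathcal{E}^8$ transparently, and is what you should adopt. The resulting complexity accounting ($\mathcal{E}^7\to\mathcal{E}^8$ for the inner recursion by Proposition \ref{prim_rec_properties}(ii), $\to\mathcal{E}^9$ for the outer induction on $d$) is then exactly the paper's and matches your stated conclusion.
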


The metric $\rho_{\infty}$ defined on $X_{\pm k}(n)$ naturally induces a metric on $\mathrm{Block}_{\pm k}^d(n)$, which we denote, abusing notation, by $\rho_{\infty}$. In particular, for every
$\mathbf{F}=(f_i)_{i=0}^{d-1}$ and $\mathbf{G}=(g_i)_{i=0}^{d-1}$ in $\mathrm{Block}_{\pm k}^d(n)$, we define the distance between $\mathbf{F}$ and $\mathbf{G}$ as
\[\rho_\infty(\mathbf{F},\mathbf{G})=\max_{0\mik i<d}\rho_{\infty}(f_i,g_i).\]
Finally, given a finite coloring  $c:\mathrm{Block}_{\pm k}^d(n)\to\{1,...,r\}$, we say that a subset
$A$ of $\mathrm{Block}_{\pm k}^d(n)$ is \textit{approximately monochromatic} if there exists some $i_0\in\{1,...,r\}$ such
that for every $\mathbf{F}$ in $A$ there exists  $\mathbf{F}'$ in $\mathrm{Block}_{\pm k}^d(n)$ with $c(\mathbf{F}')=i_0$ and $\rho_\infty(\mathbf{F},\mathbf{F}')\mik1$. We have the following multidimensional version
of Theorem \ref{full_Gowers_fin}.

\begin{thm}
  \label{mult_ful_Gowers_fin}
  For every quadrat of positive integers $k,d,m,r$ with $d\mik m$, there exists a positive integer $n_0$ satisfying the
  following property. For every integer $n\meg n_0$ and every coloring of the set $\mathrm{Block}_{\pm k}^d(n)$ with $r$ colors, there exists a block sequence $\mathbf{F}$ in $X_{\pm k}(n)$ of length $m$ such that the set $\mathrm{Block}_{\pm k}^d( \mathbf{F} )$ is approximately monochromatic.  We denote the least $n_0$ satisfying the above property by $\mathrm{MG}_{\pm}(k,d,m,r)$.

  Moreover, the numbers $\mathrm{MG}_{\pm}(k,d,m,r)$ are upper bounded by a primitive recursive function belonging to the class $\mathcal{E}^9$ of Grzegorczyk's hierarchy.
\end{thm}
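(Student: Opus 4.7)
The plan is to adapt the proof of Theorem \ref{strong_full_gowers} to the multi-dimensional setting: use Theorem \ref{mult_positive_Gowers_fin} in place of Theorem \ref{positive_Gowers_fin}, apply Lemma \ref{properties_q2} coordinatewise, and replace Lemma \ref{canon_color2} by a multi-dimensional analogue on $d$-tuples of types.

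Given a coloring $c : \mathrm{Block}_{\pm k}^d(n) \to \{1,\ldots,r\}$ with $n$ sufficiently large, set $M = \mathrm{MG}(k,d,m,r)$. First apply Theorem \ref{Mil_Tay_fin} with target set the finite set of colorings of $d$-tuples of types (of length at most $2kM$) into $\{1,\ldots,r\}$ to obtain $\mathbf{s} \in \mathrm{Block}^{2kM}(n)$ such that any two $d$-block sequences in $X_{\pm k}(\mathbf{s})$ whose coordinatewise tuples of types agree receive the same $c$-color; the argument is a direct coordinatewise extension of the proof of Lemma \ref{canon_color2}, replacing ``type'' with ``$d$-tuple of types''. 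Next, form the $\mathbf{s}$-skipped block sequence $\mathbf{G} = (q(k, 2ki + k - 1, \mathbf{s}))_{i=0}^{M-1}$ together with the bijection $Q : X_{[k]}(M) \to \langle\mathbf{G}\rangle_{[k]}$ of Theorem \ref{strong_full_gowers}; coordinatewise $Q$ restricts to a bijection $\mathrm{Block}_k^d(M) \to \mathrm{Block}_k^d(\mathbf{G})$. The pullback $\tilde c$ of $c$ along this bijection colors $\mathrm{Block}_k^d(M)$ with $r$ colors, so Theorem \ref{mult_positive_Gowers_fin} yields $\mathbf{F}_0 \in \mathrm{Block}_k^m(M)$ with $\mathrm{Block}_k^d(\mathbf{F}_0)$ $\tilde c$-monochromatic in some color $i_0$; then $\mathbf{F} = Q(\mathbf{F}_0) \in \mathrm{Block}_{\pm k}^m(n)$ makes $\mathrm{Block}_k^d(\mathbf{F})$ $c$-monochromatic in color $i_0$.

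To verify approximate monochromaticity of $\mathrm{Block}_{\pm k}^d(\mathbf{F})$, fix $\mathbf{F}^\circ = (F^j)_{j=0}^{d-1} \in \mathrm{Block}_{\pm k}^d(\mathbf{F})$. Applying Lemma \ref{properties_q2} to each $F^j$ separately produces $(F^j)', (F^j)'' \in X_{\pm k}(\mathbf{s})$ of the same type with $(F^j)'' \in \langle\mathbf{F}\rangle_k$, $\rho_\infty(F^j, (F^j)') \mik 1$, and $(F^j, (F^j)')$ of $\mathbf{s}$-displacement at most one. Since $\mathbf{F}$ inherits the $\mathbf{s}$-skipped property from $\mathbf{G}$, the tuple $\mathbf{F}^\circ$ itself is $\mathbf{s}$-skipped, so Fact \ref{fact2} applied coordinatewise implies that $\mathbf{F}' = ((F^j)')_{j=0}^{d-1}$ is a block sequence in $X_{\pm k}(\mathbf{s})$ with $\rho_\infty(\mathbf{F}^\circ, \mathbf{F}') \mik 1$. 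Then $\mathbf{F}'' = ((F^j)'')_{j=0}^{d-1} \in \mathrm{Block}_k^d(\mathbf{F})$ satisfies $c(\mathbf{F}'') = i_0$, while $\mathbf{F}'$ and $\mathbf{F}''$ share the same $d$-tuple of types and both lie in $X_{\pm k}(\mathbf{s})$, so the canonization from the first step forces $c(\mathbf{F}') = c(\mathbf{F}'') = i_0$, as required.

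The main obstacle will be the multi-dimensional type-tuple canonization: although the number of type-tuples of length at most $2kM$ is exponential in $kMd$, the auxiliary color set fed into Theorem \ref{Mil_Tay_fin} has size doubly exponential in these parameters; this nevertheless keeps the reduction in $\mathcal{E}^6$ after composition with the $\mathrm{MT}$ bounds. Since $\mathrm{MG}$ is in $\mathcal{E}^9$ and $\mathrm{MT}$ is in $\mathcal{E}^6$, Proposition \ref{prim_rec_properties}(ii) then delivers the claimed $\mathcal{E}^9$ upper bound on $\mathrm{MG}_\pm(k, d, m, r)$.
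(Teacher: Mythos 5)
Your proposal is correct and follows essentially the same route as the paper: a $d$-tuple-of-types canonization lemma in place of Lemma \ref{canon_color2} (the paper's Lemma \ref{canon_color_block}), the bijection $Q$ on $X_{[k]}(M)$ applied coordinatewise, an application of Theorem \ref{mult_positive_Gowers_fin}, and a coordinatewise use of Lemma \ref{properties_q2} (which the paper packages as Corollary \ref{cor_prop_block_displ}), with the final colour identification via the type canonization. The only point to make fully explicit is that a $d$-block subsequence of the $\mathbf{s}$-skipped $\mathbf{F}$ inherits the $\mathbf{s}$-skipped property so that Fact \ref{fact2} applies coordinatewise, which you assert and which does hold by a short check.
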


\subsection{Proof of Theorem \ref{mult_positive_Gowers_fin}} The method for obtaining Theorem \ref{mult_positive_Gowers_fin} from Theorem \ref{positive_Gowers_fin}
is quit standard. However, we include it for seasons of completeness.
For every two finite sequences $\mathbf{F}$ and $\mathbf{G}$ by $\mathbf{F}^\con\mathbf{G}$ be denote the
concatenation of $\mathbf{F}$ and $\mathbf{G}$.

\begin{lem}
  \label{stab_d}
  Let $d,\ell_1,N,N',n,k,r$ be positive integers such that  $d\mik\ell_1$, $N=\mathrm{G}(k,N',r^{(k+1)^{d\ell_1}})$ and $\ell_1+N\mik n$. Also let
  $\mathbf{G}$ and $\mathbf{F}$ be block sequences in $X_k(n)$ such that
  \begin{enumerate}
    \item[(ii)] $\mathbf{G}$ is of length at most $\ell_1$,
    \item[(ii)] $\mathbf{F}$ is of length $N$ and
    \item[(iii)] $\mathbf{G}^\con\mathbf{F}$ is a block sequence.
  \end{enumerate}
  Finally, let $c:\mathrm{Block}_k^{d+1}(n)\to\{1,...,r\}$ be a coloring. Then there exists
  $\mathbf{F}'$ in $\mathrm{Block}_k^{N'}(\mathbf{F})$ such that for every $H$ in
  $\mathrm{Block}_k^d(\mathbf{G})$ and every $f,f'$ in $\langle\mathbf{F}'\rangle_k$
  we have that $c(\mathbf{H}^\con(f))=c(\mathbf{H}^\con(f'))$.
\end{lem}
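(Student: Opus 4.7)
The plan is to apply Theorem \ref{positive_Gowers_fin} relative to $\mathbf{F}$ with a product coloring indexed by $\mathrm{Block}_k^d(\mathbf{G})$. First I would bound the cardinality of $\mathrm{Block}_k^d(\mathbf{G})$. Every element of $\langle \mathbf{G} \rangle_k$ is uniquely determined by a function from $\{0,\dots,\ell_1-1\}$ into $\{0,\dots,k\}$ recording which generators are used and with what exponent, so $|\langle \mathbf{G} \rangle_k| \mik (k+1)^{\ell_1}$ and hence $M := |\mathrm{Block}_k^d(\mathbf{G})| \mik (k+1)^{d\ell_1}$. Enumerate $\mathrm{Block}_k^d(\mathbf{G}) = \{\mathbf{H}_1,\dots,\mathbf{H}_M\}$ and define a product coloring
\[
\tilde c : \langle \mathbf{F} \rangle_k \longrightarrow \{1,\dots,r\}^M,\qquad \tilde c(f) = \big(c(\mathbf{H}_1^\con(f)),\dots,c(\mathbf{H}_M^\con(f))\big).
\]
The hypothesis that $\mathbf{G}^\con\mathbf{F}$ is a block sequence guarantees $\mathbf{H}_j^\con(f) \in \mathrm{Block}_k^{d+1}(n)$ for every $j$ and every $f \in \langle \mathbf{F} \rangle_k$, so $\tilde c$ is well defined and uses at most $r^{(k+1)^{d\ell_1}}$ colors.

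Next I would pass from the ambient space $X_k(n)$ to $X_k(N)$ via the canonical bijection $\Phi : X_k(N) \to \langle \mathbf{F} \rangle_k$ defined by
\[
\Phi(g) = \sum_{i \in \supp(g)} T^{(k - g(i))}(f_i),
\]
where $\mathbf{F} = (f_i)_{i=0}^{N-1}$. Because the sets $\supp(f_i)$ are pairwise disjoint, $\Phi$ is additive on disjointly supported inputs, and a direct pointwise computation yields the intertwining identity $T^{(\ee)}(\Phi(g)) = \Phi(T^{(\ee)}(g))$ for all $\ee \in \{0,\dots,k-1\}$ and $g \in X_{[k]}(N)$. Combined, these two facts show that $\Phi$ sends block sequences in $X_k(N)$ to block subsequences of $\mathbf{F}$ and satisfies $\Phi[\langle \tilde{\mathbf{H}} \rangle_k] = \langle \Phi \tilde{\mathbf{H}} \rangle_k$ for every block sequence $\tilde{\mathbf{H}}$ in $X_k(N)$.

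Applying Theorem \ref{positive_Gowers_fin} to the pull-back coloring $\tilde c \circ \Phi$ of $X_k(N)$, and using that $N = \mathrm{G}(k,N',r^{(k+1)^{d\ell_1}}) \meg \mathrm{G}(k,N',r^M)$ by monotonicity of $\mathrm{G}$ in the number of colors, I would obtain a block sequence $\tilde{\mathbf{H}}'$ in $X_k(N)$ of length $N'$ whose subspace $\langle \tilde{\mathbf{H}}' \rangle_k$ is monochromatic for $\tilde c \circ \Phi$. Setting $\mathbf{F}' = \Phi \tilde{\mathbf{H}}' \in \mathrm{Block}_k^{N'}(\mathbf{F})$, the image identity above forces $\langle \mathbf{F}' \rangle_k$ to be $\tilde c$-monochromatic; unpacking the definition of $\tilde c$ shows that $c(\mathbf{H}_j^\con(f))$ is independent of $f \in \langle \mathbf{F}' \rangle_k$ for every $j$, which is the desired conclusion.

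The only delicate point is the intertwining $T^{(\ee)} \circ \Phi = \Phi \circ T^{(\ee)}$. It reduces to verifying that, for an index $i \in \supp(g)$ with $g(i) \mik \ee$, the contribution from $f_i$ vanishes on the left-hand side because $T^{(k - g(i) + \ee)}(f_i) \equiv 0$ whenever $k - g(i) + \ee \meg k$ (as $f_i$ takes values in $\{0,\dots,k\}$), matching the vanishing on the right-hand side owing to $i \notin \supp(T^{(\ee)}(g))$. Everything else is bookkeeping.
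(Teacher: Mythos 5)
Your proof is correct and follows essentially the same route as the paper: both define the product (equivalently, $\mathrm{Block}_k^d(\mathbf{G})$-indexed) coloring $\tilde{c}$ on $\langle\mathbf{F}\rangle_k$, bound the number of colors by $r^{(k+1)^{d\ell_1}}$, and invoke Theorem \ref{positive_Gowers_fin} to get a monochromatic block subsequence of length $N'$. The only difference is that you spell out the canonical bijection $\Phi:X_k(N)\to\langle\mathbf{F}\rangle_k$ (and verify its additivity and $T$-intertwining) in order to apply Theorem \ref{positive_Gowers_fin}, whereas the paper treats this standard identification of $\langle\mathbf{F}\rangle_k$ with $X_k(N)$ as implicit; your verification of $\Phi$ is accurate and, if anything, makes the step more transparent.
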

\begin{proof}
  Let $\mathcal{X}$ be the set of all functions from $\mathrm{Block}_k^d(\mathbf{G})$ into $\{1,...,r\}$.
  It is easy to observe that the set $\mathrm{Block}_k^d(\mathbf{G})$ is of cardinality at most $(k+1)^{d\ell_1}$ and therefore the set $\mathcal{X}$ is of cardinality at most $r^{(k+1)^{d\ell_1}}$.
  We define a coloring $\tilde{c}:\langle\mathbf{F}\rangle_k\to\mathcal{X}$ by setting
  \[\tilde{c}(f)(\mathbf{H})=c(\mathbf{H}^\con(f))\]
  for all $\mathbf{H}$ in $\mathrm{Block}_k^d(\mathbf{G})$ and $f\in\langle\mathbf{F}\rangle_k$.
  By the choice of $N$ and Theorem \ref{positive_Gowers_fin}, we have that there exists $\mathbf{F}'$
  in $\mathrm{Block}_k^{N'}(\mathbf{F})$ such that the set $\langle\mathbf{F}'\rangle_k$
  is $\tilde{c}$-monochromatic. Thus for every $f,f'$ in $\langle\mathbf{F}'\rangle_k$ we have that
  $\tilde{c}(f)=\tilde{c}(f')$ and therefore for all $\mathbf{H}$ in $\mathrm{Block}_k^d(\mathbf{G})$
  we have that $c(\mathbf{H}^\con(f))=\tilde{c}(f)(\mathbf{H})=\tilde{c}(f')(\mathbf{H})=c(\mathbf{H}^\con(f'))$.
  The proof is complete.
\end{proof}

The next lemma follows by an iterated use of the above lemma. We will need the following invariants.
In particular, we define a function $h:\nn^5\to\nn$ as follows. For every positive integers $d,\ell,r,k$
we inductively define
\begin{equation}
\label{eq13}
\left\{ \begin{array} {l} h(d,\ell,r,k,0)=0\\
                          h(d,\ell,r,k,x+1)=\mathrm{G}(k,h(d,\ell,r,k,x)+1,r^{(k+1)^{d\ell}}).\end{array}  \right.
\end{equation}
Since the numbers $\mathrm{G}(k,m,r)$ are upper bounded by a function belonging to the class $\mathcal{E}^7$ of Grzegorczyk's hierarchy, by (ii) of Proposition \ref{prim_rec_properties}, we have that the function $h$ is upper bounded by a function belonging to the class $\mathcal{E}^8$.

\begin{lem}
  \label{canon_d}
  Let $d,\ell,n,k,r$ be positive integers such that $d<\ell$ and $n\meg d+h(d,\ell,r,k,\ell-d)$.
  Also let $c:\mathrm{Block}_k^{d+1}(n)\to\{1,...,r\}$ be a coloring. Then there exists
  $\mathbf{G}$ in $\mathrm{Block}_k^\ell(n)$ such that for every $\mathbf{J},\mathbf{J}'$ in
  $\mathrm{Block}_k^d(\mathbf{G})$ with $\mathbf{J}|d=\mathbf{J}'|d$, we have that $c(\mathbf{J})=c(\mathbf{J}')$.
\end{lem}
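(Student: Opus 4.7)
The plan is to iteratively apply Lemma \ref{stab_d}, building $\mathbf{G}$ one element at a time while shrinking a reservoir according to the recursion \eqref{eq13}. Throughout I read the statement as concerning $\mathbf{J},\mathbf{J}'\in \mathrm{Block}_k^{d+1}(\mathbf{G})$ with $\mathbf{J}|d=\mathbf{J}'|d$, since the coloring $c$ is defined on $\mathrm{Block}_k^{d+1}(n)$ and this interpretation is consistent with the output of Lemma \ref{stab_d} (otherwise the stated conclusion is vacuous).

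Set $N_x=h(d,\ell,r,k,\ell-d-x)$ for $x=0,1,\ldots,\ell-d$; by \eqref{eq13} one has $N_{\ell-d}=0$, $N_{x-1}=\mathrm{G}(k,N_x+1,r^{(k+1)^{d\ell}})$, and $N_0=h(d,\ell,r,k,\ell-d)\mik n-d$. Take $\mathbf{G}^{(0)}=(k\cdot\chi^n_{\{i\}})_{i=0}^{d-1}$ and $\mathbf{F}^{(0)}=(k\cdot\chi^n_{\{i\}})_{i=d}^{d+N_0-1}$, so $\mathbf{G}^{(0)\con}\mathbf{F}^{(0)}$ is a block sequence in $X_k(n)$. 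For $x=1,\ldots,\ell-d$, with $\mathbf{G}^{(x-1)}$ of length $d+x-1\mik\ell$ and $\mathbf{F}^{(x-1)}$ of length $N_{x-1}$ at hand, apply Lemma \ref{stab_d} (with $\ell_1=\ell$, $N=N_{x-1}$ and $N'=N_x+1$) to obtain $\mathbf{F}^{(x)'}\in\mathrm{Block}_k^{N_x+1}(\mathbf{F}^{(x-1)})$ such that $c(\mathbf{H}^\con(e))=c(\mathbf{H}^\con(e'))$ for every $\mathbf{H}\in\mathrm{Block}_k^d(\mathbf{G}^{(x-1)})$ and every $e,e'\in\langle\mathbf{F}^{(x)'}\rangle_k$. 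Let $g_{d+x-1}$ be the first element of $\mathbf{F}^{(x)'}$, let $\mathbf{F}^{(x)}$ consist of the remaining $N_x$ elements, and set $\mathbf{G}^{(x)}=\mathbf{G}^{(x-1)\con}(g_{d+x-1})$. After $\ell-d$ iterations, $\mathbf{G}=\mathbf{G}^{(\ell-d)}\in\mathrm{Block}_k^\ell(n)$.

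To verify the conclusion, fix $\mathbf{J},\mathbf{J}'\in\mathrm{Block}_k^{d+1}(\mathbf{G})$ with common initial segment $\mathbf{H}=\mathbf{J}|d$, and write $\mathbf{J}=\mathbf{H}^\con(f)$, $\mathbf{J}'=\mathbf{H}^\con(f')$. Let $i$ be the largest index $j$ such that some element of $\mathbf{H}$ involves $g_j$ in its canonical expansion over $\mathbf{G}$; the block structure of $\mathbf{H}$ and the requirement that $f,f'$ be supported past $\mathbf{H}$ force $d-1\mik i\mik\ell-2$. Setting $x_0=i-d+2\in\{1,\ldots,\ell-d\}$, we have $\mathbf{G}^{(x_0-1)}=(g_0,\ldots,g_i)$, so $\mathbf{H}\in\mathrm{Block}_k^d(\mathbf{G}^{(x_0-1)})$. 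A downward induction on $x$, using that $\mathbf{F}^{(x)}\subseteq\langle\mathbf{F}^{(x)'}\rangle_k\subseteq\langle\mathbf{F}^{(x-1)}\rangle_k$ and the transitivity of the block subspace operation $\langle\cdot\rangle_k$, shows that $g_{d+x-1},g_{d+x},\ldots,g_{\ell-1}\in\langle\mathbf{F}^{(x)'}\rangle_k$ at every step; instantiated at $x=x_0$ this gives $g_{i+1},\ldots,g_{\ell-1}\in\langle\mathbf{F}^{(x_0)'}\rangle_k$, whence $f,f'\in\langle(g_{i+1},\ldots,g_{\ell-1})\rangle_k\subseteq\langle\mathbf{F}^{(x_0)'}\rangle_k$. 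Applying the defining property of $\mathbf{F}^{(x_0)'}$ to $\mathbf{H}$, $f$ and $f'$ yields $c(\mathbf{J})=c(\mathbf{J}')$.

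The main point requiring care is identifying the correct step $x_0$ and then checking that every generator used by $f$ and $f'$ lies in $\langle\mathbf{F}^{(x_0)'}\rangle_k$; this ultimately reduces to the transitivity of $\langle\cdot\rangle_k$ under taking block subsequences. A minor technicality is that the hypothesis $\ell_1+N\mik n$ of Lemma \ref{stab_d} is met at every step because $\mathbf{F}^{(0)}$ was chosen with singleton supports inside $\{d,\ldots,d+N_0-1\}$, so the total coordinate footprint of $\mathbf{G}^{(x-1)\con}\mathbf{F}^{(x-1)}$ never exceeds $d+N_0\mik n$. The recursion \eqref{eq13} is precisely what is needed to make this iteration close, so the stated bound is exactly what the construction demands.
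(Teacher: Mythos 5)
Your proof is correct and follows essentially the same route as the paper's: both iterate Lemma~\ref{stab_d} $\ell-d$ times, at step $x$ applying it to the current reservoir to peel off one new element $g_{d+x-1}$ of $\mathbf{G}$ while shrinking the reservoir according to the recursion~\eqref{eq13} (the paper records the resulting invariants as conditions C1--C5, whereas you state them inline). You were also right to read the conclusion as concerning $\mathbf{J},\mathbf{J}'\in\mathrm{Block}_k^{d+1}(\mathbf{G})$; as written in the statement it is vacuous, and the $d+1$ reading is what the application inside the proof of Theorem~\ref{mult_positive_Gowers_fin} requires.
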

\begin{proof}
  For every $p=0,...,\ell-d$ we set $N_p=h(d,\ell,r,k,\ell-d-p)$. Then by \eqref{eq13} we have that
  \begin{equation}
    \label{eq14}
    N_{p-1}=\mathrm{G}(k,N_p+1,r^{(k+1)^{d\ell}})
  \end{equation}
  for every $p$ in $\{1,...,\ell-d\}$. We also set $\mathbf{G}_0=(k\chi^n_{\{i\}})_{i=0}^{d-1}$
  and $\mathbf{F}_0=(k\chi^n_{\{i+d-1\}})_{i=0}^{N_0}$.
  We inductively construct two sequences $(\mathbf{F}_p)_{p=0}^{\ell-p}$
   and $(\mathbf{G}_p)_{p=0}^{\ell-p}$ satisfying the following for every $p=0,...,\ell-d$.
  \begin{enumerate}
    \item[(C1)] $\mathbf{G}_p$ belongs to $\mathrm{Block}_k^{d+p}(n)$ and $\mathbf{F}_p=(f^p_i)_{i=0}^{N_p}$ belongs to $\mathrm{Block}_k^{N_p+1}(n)$.
    \item[(C2)] If $p<\ell+d$, then $\mathbf{G}_{p}^\con\mathbf{F}_p^*$ is a block sequence,
    where $\mathbf{F}_{p}^*=(f^{p}_{i+1})_{i=0}^{N_{p}-1}$.
    \item[(C3)] If $p\meg1$, then $\mathbf{F}_p$ belongs to $\mathrm{Block}_k^{N_p}(\mathbf{F}^*_{p-1})$.
    \item[(C4)] If $p\meg1$, then $\mathbf{G}_p=\mathbf{G}_{p-1}^\con(f_0^p)$.
    \item[(C5)] If $p\meg1$, then for every $\mathbf{H}$ in $\mathrm{Block}_k^d(\mathbf{G}_{p-1})$ and $f,f'$ in
    $\langle\mathbf{F}_p\rangle_k$, we have that $c(\mathbf{H}^\con(f))=c(\mathbf{H}^\con(f'))$.
  \end{enumerate}
  Observe that for $p=0$ conditions C1 and C2 are satisfied, while C3-C5 are meaningless. The inductive step of the construction is a straightforward application of
  Lemma \ref{stab_d}. Indeed, assume that for some $p$ in $\{1,...,\ell-d\}$ the sequences
  $(\mathbf{G}_i)_{i=0}^{p-1}$ and $(\mathbf{F}_i)_{i=0}^{p-1}$ have been properly chosen.
  By \eqref{eq14}, applying Lemma \ref{stab_d} for ``$\ell_1=\ell\meg d+p-1$'', ``$N=N_{p-1}$'', ``$N'=N_p+1$'',
  ``$\mathbf{G}=\mathbf{G}_{p-1}$'' and ``$\mathbf{F}=\mathbf{F}_{p-1}^*$'', we obtain $\mathbf{F}_{p}\in \mathrm{Block}_k^{N_p+1}(F_{p-1}^*)$ such that for every $H$ in
  $\mathrm{Block}_k^d(\mathbf{G}_{p-1})$ and every $f,f'$ in $\langle\mathbf{F}_p\rangle_k$
  we have that $c(\mathbf{H}^\con(f))=c(\mathbf{H}^\con(f'))$. Clearly conditions C1, C3 and C5 are satisfied. Setting $\mathbf{G}_p=\mathbf{G}_{p-1}^\con(f_0^p)$ the proof of the inductive step is complete.
  We set $\mathbf{G}=\mathbf{G}_{\ell-d}$. It is easy to check that $\mathbf{G}$ is as desired.
\end{proof}

\begin{proof}[Proof of Theorem \ref{mult_positive_Gowers_fin}]
  We proceed by induction on $d$. The base case $d=1$ is established by Theorem \ref{positive_Gowers_fin}.
  In particular, we have
  \begin{equation}
  \label{eq15}
    \mathrm{MG}(k,1,m,r)=\mathrm{G}(k,m,r)
  \end{equation}
  for every choice of positive integers $k,m,r$.
  Assume that the theorem holds for some positive integer $d$ and let $m,k,r$ be positive integers.
  We will prove the inductive step by showing that
  \begin{equation}
    \label{eq16}
    \mathrm{MG}(k,d+1,m,r)\mik d+h(d,M+1,r,k,M-d+1),
  \end{equation}
  where $M=\mathrm{MG}(k,d,m-1,r)$ and $h$ is as defined in \eqref{eq13}.
  Indeed, let $n$ be a positive integer
  with
  \begin{equation}
    \label{eq17}
    n\meg d+h(d,M+1,r,k,M-d+1).
  \end{equation}
  Also let $c:\mathrm{Block}_k^{d+1}(n)\to\{1,...,r\}$ be a coloring. By \eqref{eq17} and Lemma \ref{canon_d}
  for ``$\ell=M+1$'', we have that there exists $\mathbf{G}=(g_i)_{i=0}^{M}$ in $\mathrm{Block}_k^{M+1}(n)$ such that for every $\mathbf{J},\mathbf{J}'$ in
  $\mathrm{Block}_k^d(\mathbf{G})$ with $\mathbf{J}|d=\mathbf{J}'|d$, we have that $c(\mathbf{J})=c(\mathbf{J}')$.
  We set $\mathbf{G}^*=(g_i)_{i=0}^{M-1}$. We define a coloring $\tilde{c}:\mathrm{Block}^d_k(\mathbf{G}^*)\to\{1,...,r\}$ by setting
  \[\tilde{c}(\mathbf{H})=c(\mathbf{H}^\con(g_M)).\]
  By the inductive assumption and the definition of $M$ we obtain $\mathbf{F}^*$ in $\mathrm{Block}_k^{m-1}(\mathbf{G}^*)$
  such that the set $\mathrm{Block}_k^d(\mathbf{F}^*)$ is $\tilde{c}$-monochromatic. We set $\mathbf{F}=\mathbf{F}^{*\con}(g_M)$. It is easy to see that $\mathbf{F}$ is as desired.

  Finally, since the numbers $\mathrm{G}(k,m,r)$ are upper bounded by a function belonging to the class $\mathcal{E}^7$ of Grzegorczyk's hierarchy and $h$ is upper bounded by a function belonging to the class $\mathcal{E}^8$, by \eqref{eq15}, \eqref{eq16} and (ii) of Proposition \ref{prim_rec_properties}, we have that the numbers $\mathrm{MG}(k,d,m,r)$ are upper bounded by a function belonging to $\mathcal{E}^9$.
\end{proof}

\subsection{Proof of Theorem \ref{mult_ful_Gowers_fin}}
The reduction of Theorem \ref{mult_ful_Gowers_fin} to Theorem \ref{mult_positive_Gowers_fin}
is similar to the one of Theorem \ref{full_Gowers_fin} to Theorem \ref{positive_Gowers_fin}.
We will need the analogues of Lemmas \ref{canon_color1} and \ref{properties_q2} for block sequences.
To this end, we define the type of a block sequence $\mathbf{F}=(f_i)_{i=0}^{m-1}$ in $X_{[\pm k]}(n)$ by setting
\[\mathrm{tp}(\mathbf{F})=(\mathrm{tp}(f_i))_{i=0}^{m-1}.\]
Moreover for every $k, d, m,n$ positive integers, with $d\mik m\mik n$, and $\mathbf{s}$ in $\mathrm{Block}^m(n)$, by
$\mathrm{Block}^d_{\pm k}(\mathbf{s})$ we denote the set of all block sequences in $X_{\pm k}(\mathbf{s})$
of length $d$.

\begin{lem}
  \label{canon_color_block}
  Let $d,m,n,k,r$ be positive integers, with $d\mik m$ and
  \[n\meg \mathrm{MT}(m,2m-1,r^\gamma),\]
  where $\gamma={m \choose d}2^dm^d(2k-1)^{d(m-1)}$. Then for every coloring of the set $\mathrm{Block}^d_{\pm k}(n)$ with $r$ colors, there
  exists  $\mathbf{s}$ in $\mathrm{Block}^m(n)$ such that every $\mathbf{F},\mathbf{F}'$ in $\mathrm{Block}^d_{\pm k}(\mathbf{s})$ of the same type have the same color.
\end{lem}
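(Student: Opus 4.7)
The approach is to adapt the proof of Lemma \ref{canon_color1} almost verbatim, with individual functions replaced by block sequences of length $d$ and with the auxiliary coloring $\tilde{c}$ taking values in functions from type-tuples into colors.

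First I would verify the counting estimate $\gamma$. A type of a block sequence in $\mathrm{Block}^d_{\pm k}(\mathbf{s})$ with $\mathbf{s}$ of length at most $m$ is a tuple $\vec{\varphi}=(\varphi_i)_{i=0}^{d-1}$ with $|\varphi_i|=\ell_i\meg1$ and $\sum_i\ell_i\mik m$. The number of admissible length-tuples $(\ell_0,\ldots,\ell_{d-1})$ is $\sum_{p=d}^{m}\binom{p-1}{d-1}=\binom{m}{d}$ by the hockey-stick identity, and for each $\ell\mik m$ the number of types of length $\ell$ over $\pm k$ is at most $2\ell(2k-1)^{\ell-1}\mik 2m(2k-1)^{m-1}$ (the analogue of the positive-case bound $\ell(k-1)^{\ell-1}$: pick the position and sign of some $\pm k$-valued coordinate in $2\ell$ ways, then fill the remaining coordinates with one of $2k-1$ values avoiding the preceding one). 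Multiplying, the number of type-tuples is at most $\binom{m}{d}\cdot 2^d m^d(2k-1)^{d(m-1)}=\gamma$. Let $\mathcal{T}$ denote this set of type-tuples and put $\mathcal{X}=\{1,\ldots,r\}^{\mathcal{T}}$, so $|\mathcal{X}|\mik r^{\gamma}$.

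Next I would define a coloring $\tilde{c}:\mathrm{Block}^m(n)\to\mathcal{X}$ in the style of Lemma \ref{canon_color1}. For $\mathbf{t}=(t_j)_{j=0}^{m-1}\in\mathrm{Block}^m(n)$ and $\vec{\varphi}\in\mathcal{T}$ with $p=\sum_i|\varphi_i|$, set $\tilde{c}(\mathbf{t})(\vec{\varphi})=c(\mathbf{F}_{\mathbf{t},\vec{\varphi}})$, where $\mathbf{F}_{\mathbf{t},\vec{\varphi}}=(f_i)_{i=0}^{d-1}$ is the canonical block sequence assembled from the first $p$ blocks of $\mathbf{t}$, namely $f_i=\mathrm{map}(\varphi_i,(t_{|\varphi_0|+\cdots+|\varphi_{i-1}|+j})_{j=0}^{|\varphi_i|-1})$. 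Since $n\meg \mathrm{MT}(m,2m-1,r^{\gamma})$, Theorem \ref{Mil_Tay_fin} yields $\mathbf{s}'\in\mathrm{Block}^{2m-1}(n)$ on which $\tilde{c}$ is constant over $\mathrm{Block}^m(\mathbf{s}')$; I set $\mathbf{s}=\mathbf{s}'|m$.

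To conclude, let $\mathbf{F},\mathbf{F}'\in\mathrm{Block}^d_{\pm k}(\mathbf{s})$ share a type $\vec{\varphi}$ of total length $p\mik m$. The concatenations of their block-supports give $\mathbf{t}^{\mathbf{F}},\mathbf{t}^{\mathbf{F}'}\in\mathrm{Block}^p(\mathbf{s})$. Because $\mathbf{s}'$ has length $2m-1$ while $\mathbf{t}^{\mathbf{F}}$ and $\mathbf{t}^{\mathbf{F}'}$ use only elements of $\mathbf{s}=\mathbf{s}'|m$, there are at least $m-1\meg m-p$ blocks of $\mathbf{s}'$ strictly to the right of each, so I extend both to sequences $\tilde{\mathbf{t}}^{\mathbf{F}},\tilde{\mathbf{t}}^{\mathbf{F}'}\in\mathrm{Block}^m(\mathbf{s}')$ by appending singletons $s'_m,s'_{m+1},\ldots$ while preserving the first $p$ entries. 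By construction $\mathbf{F}_{\tilde{\mathbf{t}}^{\mathbf{F}},\vec{\varphi}}=\mathbf{F}$ and $\mathbf{F}_{\tilde{\mathbf{t}}^{\mathbf{F}'},\vec{\varphi}}=\mathbf{F}'$, and the constancy of $\tilde{c}$ on $\mathrm{Block}^m(\mathbf{s}')$ forces $c(\mathbf{F})=c(\mathbf{F}')$. The only real source of friction is the bookkeeping of the estimate for $\gamma$; the structural argument otherwise follows Lemma \ref{canon_color1} essentially line by line.
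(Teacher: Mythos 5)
Your proposal is correct and matches the paper's proof essentially line by line: same set $\mathcal{T}$ of type-tuples, same bound $|\mathcal{T}|\mik\gamma$ (the paper bounds the number of types of each length by $2m(2k-1)^{m-1}$ and notes the rest is ``easy to check''; your hockey-stick count of length-profiles is just an explicit version of that), same auxiliary coloring $\tilde{c}$ built from the canonical assembly of a type-tuple with the first $p$ blocks of $\mathbf{t}$, same invocation of Theorem \ref{Mil_Tay_fin} for $\mathbf{s}'\in\mathrm{Block}^{2m-1}(n)$, same passage to $\mathbf{s}=\mathbf{s}'|m$, and same end-extension of the concatenated block-supports to elements of $\mathrm{Block}^m(\mathbf{s}')$. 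No meaningful divergence from the paper's argument.
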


\begin{proof}
  We set $\mathcal{T}$ to be the set of sequences $\overline{\varphi}=(\varphi_i)_{i=0}^{d-1}$ of length $d$, such that  $\varphi_i$ is a type over $\pm k$ for all $0\mik i<d$ and $\sum_{i=0}^{d-1}|\varphi_i|\mik m$.
  Observing that the cardinality of the set of all types over $\pm k$ of some length
  $\ell\mik m$ is upper bounded by $2\ell(2k-1)^{\ell-1}\mik2m(2k-1)^{m-1}$, it easy to check
  that the cardinality of $\mathcal{T}$ is at most $\gamma$.
  Therefore, setting $\mathcal{X}$ to be the set of all maps from $\mathcal{T}$ into
  $\{1,...,r\}$, we have that the cardinality of $\mathcal{X}$ is at most $r^\gamma$.

  Let $c:\mathrm{Block}^d_{\pm k}(n)\to\{1,...,r\}$ be a coloring.
  Next we define a new coloring
  $\tilde{c}:\mathrm{Block}^m(n)\to\mathcal{X}$ as follows. For every $\overline{\varphi}=(\varphi_i)_{i=0}^{d-1}$ in $\mathcal{T}$ we define $\ell^{\overline{\varphi}}_0=0$ and $\ell_i^{\overline{\varphi}}=\sum_{j=0}^{i-1}|\varphi_j|$ for all $1\mik i<d$.
  Moreover, for every $\overline{\varphi}=(\varphi_i)_{i=0}^{d-1}$ in $\mathcal{T}$ and
  every $\mathbf{t}=(t_i)_{i=0}^{m-1}$ in $\mathrm{Block}^m(n)$  we define $\mathrm{bl}(\overline{\varphi},\mathbf{t})$
  in $\mathrm{Block}_{\pm k}^d(\mathbf{t})$ by the rule
  \[\mathrm{bl}(\overline{\varphi},\mathbf{t})=\big(\mathrm{map}(\varphi_i,(t_{\ell_i^{\overline{\varphi}}+j})_{j=0}^{|\varphi_i|-1})\big)_{i=0}^{d-1}.\]
  Clearly, for every choice of $\overline{\varphi}$ and $\mathbf{t}$ as above, we have that
  $\mathrm{bl}(\overline{\varphi},\mathbf{t})$ is of type $\overline{\varphi}$.
  Finally, for every $\mathbf{t}$ in $\mathrm{Block}^m(n)$ we define an element
  $q_\mathbf{t}$ of $\mathcal{X}$ by setting for every $\overline{\varphi}$ in $\mathcal{T}$
  \[q_\mathbf{t}(\overline{\varphi})=c(\mathrm{bl}(\overline{\varphi},\mathbf{t})).\]
  We define $\tilde{c}$ by setting $\tilde{c}(\mathbf{t})=q_\mathbf{t}$ for all $\mathbf{t}$ in
  $\mathrm{Block}^m(n)$.

  Since $n\meg \mathrm{MT}(m,2m-1,r^\gamma)$, applying Theorem \ref{Mil_Tay_fin},
  we obtain a block sequence $\mathbf{s}'\in\mathrm{Block}^{2m-1}(n)$ such that the set $\mathrm{Block}^m(\mathbf{s'})$ is $\tilde{c}$-monochromatic.
  That is, there exists $q$ in $\mathcal{X}$ such that for every $\mathbf{t}$ in $\mathrm{Block}^m(\mathbf{s}')$ we have that
  $\tilde{c}(\mathbf{t})=q_\mathbf{t}=q$.
  We set $\mathbf{s}=\mathbf{s}'|m$ and
  we observe that $\mathbf{s}$ is as desired. Indeed, let
  $\mathbf{F}=(f_i)_{i=0}^{d-1}$ and $\mathbf{F}'=(f'_i)_{i=0}^{d-1}$ in $\mathrm{Block}^d_{\pm k}(\mathbf{s})$ of the same type
  $\overline{\varphi}=(\varphi_i)_{i=0}^{d-1}$. Also let $\ell=\sum_{i=0}^{d-1}|\varphi_i|$.
  Clearly $d\mik\ell\mik m$. Also let
  \[\mathbf{t}_1=\mathrm{bsupp}(f_1)^\con...^\con\mathrm{bsupp}(f_{d-1})\;\text{and}\;
  \mathbf{t}'_1=\mathrm{bsupp}(f'_1)^\con...^\con\mathrm{bsupp}(f'_{d-1}).\]
  Clearly, both $\mathbf{t}_1$ and $\mathbf{t}'_1$ are of length $\ell$.
  Since $\mathbf{s}$
  is the initial segment of $\mathbf{s}'$ of length $m$ and $\mathbf{s}'$ is of length $2m-1$, we can
  end-extend both $\mathbf{t}_1$ and $\mathbf{t}'_1$ into $\mathbf{t}$ and
  $\mathbf{t}'$ respectively elements of $\mathrm{Block}^m(\mathbf{s}')$. Since $\mathbf{F}=\mathrm{bl}(\overline{\varphi},\mathbf{t})$ and $\mathbf{F}'=\mathrm{bl}(\overline{\varphi},\mathbf{t}')$, we get that
  $c(\mathbf{F})=q_{\mathbf{t}}(\overline{\varphi})=q(\overline{\varphi})=q_{\mathbf{t}'}(\overline{\varphi})=c(\mathbf{F}')$. The proof is complete.
\end{proof}

By Lemma \ref{properties_q2}, we have the following consequence.
To state it we need a slight modification of the existing terminology.
For $k,d,n$ positive integers with $d\mik n$ and $\mathbf{F}$ a block sequence in $X_{\pm k}(n)$ of length
at least $d$, we denote by $\mathrm{Block}_k^d(\mathbf{F})$ the set of all block sequences in $\langle\mathbf{F}\rangle_k$ of length $d$.

\begin{cor}
  \label{cor_prop_block_displ}
  Let $d,m,n,k$ be positive integers, with $d\mik m$ and $2km\mik n$, and $\mathbf{s}\in \mathrm{Block}^{2km}(n)$.
  We set $\mathbf{G}=(g_i)_{i=0}^{m-1}=(q(k,\ell_i,\mathbf{s}))_{i=0}^{m-1}$ where $\ell_i=2ki+k-1$
  for all $0\mik i<m$.
  Let $\mathbf{F}$ be a block sequence in $\langle\mathbf{G}\rangle_k$.
  Then for every $\mathbf{H}=(h_i)_{i=0}^{d-1}$ in $\mathrm{Block}_{\pm k}^d(\mathbf{F})$ there exist
  $\mathbf{H}'$ in $\mathrm{Block}_{\pm k}^d(\mathbf{s})$ and
  $\mathbf{H}''=(h''_i)_{i=0}^{d-1}$ in $\mathrm{Block}_k^d(\mathbf{F})$
  such that
  \begin{enumerate}
    \item[(i)] the sequences $\mathbf{H}',\mathbf{H}''$ are of the same type,
    \item[(ii)] $\rho_\infty(\mathbf{H},\mathbf{H}')\mik1$,
    \item[(iii)] the pair $(\mathbf{H},\mathbf{H}')$ is of $\mathbf{s}$-displacement at most one and
    \item[(iv)] $\supp(h_i)=\supp(h''_i)$, for all $0\mik i<d$.
  \end{enumerate}
\end{cor}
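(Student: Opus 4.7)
The plan is to apply Lemma \ref{properties_q2} coordinate-wise to each $h_i$ and then assemble the results into block sequences $\mathbf{H}'$ and $\mathbf{H}''$, invoking Fact \ref{fact2} to upgrade the pointwise displacement information to the required block-level statements.

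More concretely, for each $0\mik i<d$ we have $h_i\in\langle\mathbf{F}\rangle_{\pm k}$, so Lemma \ref{properties_q2} furnishes $h'_i\in X_{\pm k}(\mathbf{s})$ and $h''_i\in\langle\mathbf{F}\rangle_k$ with the four pointwise properties: same type, $\rho_\infty(h_i,h'_i)\mik1$, $\mathbf{s}$-displacement at most one for the pair $(h_i,h'_i)$, and $\supp(h_i)=\supp(h''_i)$. Setting $\mathbf{H}'=(h'_i)_{i=0}^{d-1}$ and $\mathbf{H}''=(h''_i)_{i=0}^{d-1}$, the conclusions (i), (ii), (iii), (iv) of the corollary are then pointwise consequences (taking a maximum over $i$ for $\rho_\infty$). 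What remains is to check that $\mathbf{H}'$ and $\mathbf{H}''$ are genuine block sequences in the appropriate ambient spaces.

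For $\mathbf{H}''$ this is immediate: property (iv) says $\supp(h''_i)=\supp(h_i)$, and $\mathbf{H}$ is a block sequence, so $\mathbf{H}''$ is too; combined with $h''_i\in\langle\mathbf{F}\rangle_k$, we get $\mathbf{H}''\in\mathrm{Block}_k^d(\mathbf{F})$. For $\mathbf{H}'$ the main point is that $\mathbf{H}$ is $\mathbf{s}$-skipped block, after which Fact \ref{fact2} applies directly to yield that $(h'_i)_{i=0}^{d-1}$ is a block sequence in $X_{\pm k}(\mathbf{s})$. Verifying the $\mathbf{s}$-skipped property reduces to tracking supports through the chain $\mathbf{G}\rightsquigarrow\mathbf{F}\rightsquigarrow\mathbf{H}$: by definition $\supp_{\mathbf{s}}(g_i)=\{2ki,\dots,2ki+2k-2\}$, so the index $2k(i+1)-1$ is skipped between $g_i$ and $g_{i+1}$, making $\mathbf{G}$ itself $\mathbf{s}$-skipped. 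Since every element of $\langle\mathbf{G}\rangle_k$ has $\mathbf{s}$-support contained in the union of the $\supp_{\mathbf{s}}(g_i)$'s it uses, a block subsequence of $\langle\mathbf{G}\rangle_k$ inherits an $\mathbf{s}$-skipped structure, and the same inheritance argument passes from $\mathbf{F}$ to $\mathbf{H}\in\mathrm{Block}_{\pm k}^d(\mathbf{F})$.

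The main obstacle is psychological rather than technical: one must be confident that the pointwise statement of Lemma \ref{properties_q2} glues together to a block-level statement, and the only nontrivial ingredient making that gluing work is that the base block sequence $\mathbf{G}$ was constructed with enough spacing (the factor $2k$ in $\ell_i=2ki+k-1$) so that the extra index of displacement per coordinate never destroys the block property. Once the $\mathbf{s}$-skipped observation above is in place, Fact \ref{fact2} does the rest and all four conclusions follow with no further calculation.
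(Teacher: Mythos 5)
Your proposal is correct and matches the route the paper intends (the paper states the corollary as a direct consequence of Lemma \ref{properties_q2} without spelling out the details). You apply Lemma \ref{properties_q2} to each $h_i$, note that conclusions (i)--(iv) are componentwise, and then check that $\mathbf{H}''$ is block via (iv) and that $\mathbf{H}'$ is block via Fact \ref{fact2}, after verifying that $\mathbf{H}$ is $\mathbf{s}$-skipped; your support computation $\supp_{\mathbf{s}}(g_i)=\{2ki,\dots,2ki+2k-2\}$ and the inheritance of the $\mathbf{s}$-skipped property through $\mathbf{G}\rightsquigarrow\mathbf{F}\rightsquigarrow\mathbf{H}$ are both correct, so the argument is complete.
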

Actually, we prove the following slightly stronger version of Theorem \ref{mult_ful_Gowers_fin}.
\begin{thm}
  \label{mult_strong_full_gowers}
  Let $d,m,n,k,r$ be positive integers with $d\mik m$ and
  \begin{equation}
  \label{eq18}
  n\meg\mathrm{MT}(2kM,4kM-1,r^\gamma),
  \end{equation}
  where $M=\mathrm{MG}(k,d,m,r)$ and $\gamma={2kM \choose d}2^d(2kM)^d(2k-1)^{d(2kM-1)}$. Then for every coloring $c:\mathrm{Block}_{\pm k}^d(n)\to\{1,...,r\}$ there exist $\mathbf{s}\in\mathrm{Block}^{2kM}(n)$ and
  $\mathbf{F}\in\mathrm{Block}_{\pm k}^m(\mathbf{s})$ such that
  \begin{enumerate}
    \item[(i)] $\mathbf{F}$ is $\mathbf{s}$-skipped block and
    \item[(ii)] there exists $i_0$ in $\{1,...,r\}$ such that for every $\mathbf{H}\in\mathrm{Block}_{\pm k}^d(\mathbf{F})$ there exists $\mathbf{H}'\in\mathrm{Block}_{\pm k}^d(\mathbf{s})$
        such that $c(\mathbf{H}')=i_0$, $\rho_\infty(\mathbf{H},\mathbf{H}')\mik1$ and the pair $(\mathbf{H},\mathbf{H}')$ is of $\mathbf{s}$-displacement at most one.
  \end{enumerate}
        In particular, we have that $\mathrm{Block}_{\pm k}^d(\mathbf{F})$ is approximately monochromatic.
\end{thm}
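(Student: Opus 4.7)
The plan is to mimic the argument of Theorem \ref{strong_full_gowers}, replacing its three single-dimensional ingredients by their block-sequence counterparts: Theorem \ref{positive_Gowers_fin} by Theorem \ref{mult_positive_Gowers_fin}, Lemma \ref{canon_color1} by Lemma \ref{canon_color_block}, and Lemma \ref{properties_q2} by Corollary \ref{cor_prop_block_displ}. Using the lower bound \eqref{eq18} on $n$, I would first apply Lemma \ref{canon_color_block} (with ``$m=2kM$'') to the coloring $c$ to obtain a block sequence $\mathbf{s}\in\mathrm{Block}^{2kM}(n)$ with the property that any two block sequences in $\mathrm{Block}_{\pm k}^d(\mathbf{s})$ of the same type receive the same $c$-color.

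Next, following Theorem \ref{strong_full_gowers}, I set $\mathbf{G}=(g_i)_{i=0}^{M-1}=(q(k,\ell_i,\mathbf{s}))_{i=0}^{M-1}$ with $\ell_i=2ki+k-1$; by construction $\mathbf{G}$ is $\mathbf{s}$-skipped block. The map $Q:X_{[k]}(M)\to\langle\mathbf{G}\rangle_{[k]}$ defined by $Q(g)=\sum_{i=0}^{M-1}T^{(k-g(i))}(g_i)$ is a bijection restricting to a bijection $X_k(M)\to\langle\mathbf{G}\rangle_k$, sends block sequences to block sequences, and commutes with $T$. Extend $Q$ to block sequences by $\hat{Q}(\mathbf{J})=(Q(J_i))_{i=0}^{d-1}$ for $\mathbf{J}=(J_i)_{i=0}^{d-1}\in\mathrm{Block}_k^d(M)$; this lands in $\mathrm{Block}_k^d(\mathbf{G})\subseteq\mathrm{Block}_{\pm k}^d(\mathbf{s})$. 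I define the auxiliary coloring $\tilde{c}:\mathrm{Block}_k^d(M)\to\{1,\ldots,r\}$ by $\tilde{c}(\mathbf{J})=c(\hat{Q}(\mathbf{J}))$. Since $M=\mathrm{MG}(k,d,m,r)$, Theorem \ref{mult_positive_Gowers_fin} applied to $\tilde{c}$ produces $\mathbf{J}_0=(J^0_i)_{i=0}^{m-1}\in\mathrm{Block}_k^m(M)$ such that $\mathrm{Block}_k^d(\mathbf{J}_0)$ is $\tilde{c}$-monochromatic, say of color $i_0$. Setting $\mathbf{F}=(Q(J^0_i))_{i=0}^{m-1}$, the block-preservation of $Q$ implies $\mathbf{F}\in\mathrm{Block}_k^m(\mathbf{G})$, whence $\mathbf{F}$ inherits the $\mathbf{s}$-skipped block property of $\mathbf{G}$, giving conclusion (i). The same block-preservation yields $\hat{Q}[\mathrm{Block}_k^d(\mathbf{J}_0)]=\mathrm{Block}_k^d(\mathbf{F})$ (the block-sequence analogue of \eqref{eq12}), so every element of $\mathrm{Block}_k^d(\mathbf{F})$ has $c$-color $i_0$.

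To establish (ii), fix an arbitrary $\mathbf{H}\in\mathrm{Block}_{\pm k}^d(\mathbf{F})$. Since $\mathbf{F}$ is a block subsequence of $\mathbf{G}$ in $\langle\mathbf{G}\rangle_k$, Corollary \ref{cor_prop_block_displ} applies and furnishes $\mathbf{H}'\in\mathrm{Block}_{\pm k}^d(\mathbf{s})$ and $\mathbf{H}''\in\mathrm{Block}_k^d(\mathbf{F})$ of the same type with $\rho_\infty(\mathbf{H},\mathbf{H}')\mik1$ and with $(\mathbf{H},\mathbf{H}')$ of $\mathbf{s}$-displacement at most one. By the preceding paragraph $c(\mathbf{H}'')=i_0$, and since $\mathbf{H}'$ and $\mathbf{H}''$ both lie in $\mathrm{Block}_{\pm k}^d(\mathbf{s})$ and share a type, the canonicity property of $\mathbf{s}$ forces $c(\mathbf{H}')=c(\mathbf{H}'')=i_0$, which is exactly (ii); the final clause on approximate monochromaticity of $\mathrm{Block}_{\pm k}^d(\mathbf{F})$ is then immediate. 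The main conceptual work has already been absorbed by the new tools (Lemma \ref{canon_color_block} and Corollary \ref{cor_prop_block_displ}); the only obstacle I anticipate is the bookkeeping verification that $\hat{Q}$ induces a bijection between $\mathrm{Block}_k^d(\mathbf{J}_0)$ and $\mathrm{Block}_k^d(\mathbf{F})$, which follows routinely from the bijectivity and block-preservation of $Q$ exactly as in the derivation of \eqref{eq12}.
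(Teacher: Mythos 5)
Your proposal follows the paper's proof essentially step for step: Lemma \ref{canon_color_block} to canonicalize the coloring with respect to types of block sequences, the pyramid functions $q(k,\ell_i,\mathbf{s})$ and the map $Q$ (and its block-sequence extension) to transfer the problem to a coloring of $\mathrm{Block}_k^d(M)$, Theorem \ref{mult_positive_Gowers_fin} to extract a monochromatic $\mathrm{Block}_k^d(\mathbf{F})$, and Corollary \ref{cor_prop_block_displ} plus the canonicity of $\mathbf{s}$ to upgrade this to conclusion (ii) for all of $\mathrm{Block}_{\pm k}^d(\mathbf{F})$. This is correct and is the paper's argument (you also quietly corrected the paper's small typo $T^{(k-h(i))}$ in the definition of $Q$, which should be $T^{(k-g(i))}$).
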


\begin{proof}
  Let $c:\mathrm{Block}_{\pm k}(n)\to\{1,...,r\}$ be a coloring.
  By \eqref{eq18} and Lemma \ref{canon_color_block}, applied for ``$m=2kM$'',
  there
  exists an $\mathbf{s}$ in $\mathrm{Block}^{2kM}(n)$ such that every $\mathbf{H},\mathbf{H}'$ in $\mathrm{Block}^d_{\pm k}(\mathbf{s})$ of the same type have the same color.
  We define $\mathbf{G}=(g_i)_{i=0}^{M-1}=(q(k,\ell_i,\mathbf{s}))_{i=0}^{M-1}$, where $\ell_i=2ki+k-1$
  for all $0\mik i<M$, as in Corollary \ref{cor_prop_block_displ}.
  We define a map $Q:X_{[k]}(M)\to\langle\mathbf{G}\rangle_{[k]}$, as in the proof of Theorem
  \ref{full_Gowers_fin}, by setting
  \[Q(g)=\sum_{i=0}^{M-1}T^{(k-h(i))}(g_i)\]
  for all $g$ in $X_{[k]}(M)$. Moreover, we define a map
  $Q_d:\mathrm{Block}_{k}^d(M)\to\mathrm{Block}_{k}^d(\mathbf{G})$ setting
  \[Q_d(\mathbf{H}')=(Q(h'_i))_{i=0}^{d-1}\]
  for every $\mathbf{H}'=(h'_i)_{i=0}^{d-1}$ in $\mathrm{Block}_{k}^d(M)$.
  It is easy to observe that $Q_d$ is 1-1 and onto. Moreover, for every $\mathbf{F}'=(f'_i)_{i=0}^{\ell-1}$ block sequence in $X_k(M)$, setting $\mathbf{F}=(Q(f'_i))_{i=0}^{\ell-1}$, we have that the restriction of $Q_d$ on $\mathrm{Block}_{k}^d(\mathbf{F}')$ is 1-1 and onto $\mathrm{Block}_{k}^d(\mathbf{F})$.
  Finally, we define a coloring $\tilde{c}:\mathrm{Block}_{k}^d(M)\to\{1,...,r\}$, setting
  $\tilde{c}(\mathbf{H}')=c(Q(\mathbf{H}'))$ for all $\mathbf{H}'$ in $\mathrm{Block}_{k}^d(M)$.

  By the choice of $M$ and Theorem \ref{mult_positive_Gowers_fin},
  applied for the coloring $\tilde{c}$, we have that
  there exists a block sequence $\mathbf{F}'=(f'_i)_{i=0}^{m-1}$ in $X_k(n)$ of length $m$ such that the set $\mathrm{Block}_k^d( \mathbf{F}' )$ is monochromatic with respect to $\tilde{c}$.
  We set $\mathbf{F}=(Q(f'_i))_{i=0}^{m-1}$. By the definition of the coloring $\tilde{c}$,
  we have that $\mathrm{Block}_k^d(\mathbf{F})$ is monochromatic with respect to $c$.
  That is, there exists $j_0$ in $\{1,...,r\}$ such that $c(\mathbf{H})=j_0$ for all $\mathbf{H}$ in
  $\mathrm{Block}_k^d(\mathbf{F})$.

  Let us observe that $\mathbf{F}$ is as desired.
  Indeed, observe that $\mathbf{G}$ is an $\mathbf{s}$-skipped block sequence and therefore,
  since $\mathbf{F}$ is a block subsequence of $\mathbf{G}$, we have that $\mathbf{F}$ is an $\mathbf{s}$-skipped block sequence too, that is, part (i) of the theorem is satisfied.
  Let $\mathbf{H}$ in $\mathrm{Block}_{\pm k}^d(\mathbf{F})$. By the definition of
  $\mathbf{G}$, the fact that $\mathbf{F}$ is a block sequence in $\langle\mathbf{G}\rangle_k$
  and Corollary \ref{cor_prop_block_displ}, we have that
  there exist
  $\mathbf{H}'$ in $\mathrm{Block}_{\pm k}^d(\mathbf{s})$ and
  $\mathbf{H}''$ in $\mathrm{Block}_k^d(\mathbf{F})$
  such that
  \begin{enumerate}
    \item[(i)] the sequences $\mathbf{H}',\mathbf{H}''$ are of the same type,
    \item[(ii)] $\rho_\infty(\mathbf{H},\mathbf{H}')\mik1$ and
    \item[(iii)] the pair $(\mathbf{H},\mathbf{H}')$ is of $\mathbf{s}$-displacement at most one.
    \end{enumerate}
  Since $\mathbf{H}',\mathbf{H}''$ both belong to $\mathrm{Block}_{\pm k}^d(\mathbf{s})$, by the choice
  of $\mathbf{s}$ and by (i) above, we have that
  \begin{equation}
    \label{eq20}
    c(\mathbf{H}')=c(\mathbf{H}'').
  \end{equation}
  Since $\mathbf{H}''$ belongs to $\mathrm{Block}_k^d(\mathbf{F})$, we have that
  \begin{equation}
    \label{eq21}
    c(\mathbf{H}'')=j_0.
  \end{equation}
  By \eqref{eq20} and \eqref{eq21}, we have that $c(\mathbf{H}')=j_0$. By (ii) and (iii) above, the
  proof is complete.
\end{proof}

  Clearly, Theorem \ref{mult_strong_full_gowers} yields Theorem \ref{mult_ful_Gowers_fin} and in particular, for
every choice of positive integers $d,m,k,r$  with $d\mik m$ we have that
\[\mathrm{MG}_{\pm}(k,d,m,r)\mik  \mathrm{MT}(2kM,4kM-1,r^\gamma),
  \]
  where $M=\mathrm{MG}(k,d,m,r)$ and $\gamma={2kM \choose d}2^d(2kM)^d(2k-1)^{d(2kM-1)}$.
  Therefore, since the numbers $\mathrm{MG}(k,d,m,r)$ are upper bounded by a function belonging to the class $\mathcal{E}^9$ of Grzegorczyk's hierarchy and the numbers
  $\mathrm{MT}(d,m,r)$ are upper bounded by a function belonging to the class $\mathcal{E}^6$, by
  (ii) of Proposition \ref{prim_rec_properties}, we have that the numbers $\mathrm{MG}_{\pm}(k,d,m,r)$ are upper bounded by a function belonging to $\mathcal{E}^9$.

\section{Concluding Remarks}
It is easy to observe that $X_{\pm k}(n)$ does not admit the Ramsey property, just by considering a
coloring depending on the sign of the function at the minimum of its support.
It is natural then to ask whether $X_{\pm k}(n)$  admits a Ramsey degree (for details concerning the notion of Ramsey degree, we refer the reader to \cite{KPT}).
The answer turns out to be negative. In particular, the following holds.

\begin{prop}
  For every pair of positive integers $K,n$ with  $n\meg 2K$ there exists a coloring $c:X_{\pm 1}(n)\to \{1,....,K\}$ such that for every block sequence $\mathbf{F}$ in $X_{\pm k}(n)$ of length $2K$, we have that $\langle\mathbf{F}\rangle_{\pm k}$ realizes all the colors.
\end{prop}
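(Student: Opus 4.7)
The plan is to define the coloring by counting sign changes along the support modulo $K$. Explicitly, for $f\in X_{\pm 1}(n)$ with $\supp(f)=\{i_1<\cdots<i_p\}$, put
\[\sigma(f)=\big|\{t:1\mik t\mik p-1,\;f(i_t)\neq f(i_{t+1})\}\big|,\]
and let $c(f)=(\sigma(f)\bmod K)+1\in\{1,\ldots,K\}$. Since $k=1$, the operator $T$ is trivial and each $\varepsilon_i\in\{0,\ldots,k-1\}=\{0\}$, so every element of $\langle\mathbf{F}\rangle_{\pm 1}$ has the form $g=\sum_{j\in S}\epsilon_j f_j$ for some nonempty $S\subseteq\{0,\ldots,2K-1\}$ and signs $\epsilon_j\in\{-1,+1\}$.

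Given a block sequence $\mathbf{F}=(f_0,\ldots,f_{2K-1})$, I would restrict attention to the full sum $S=\{0,\ldots,2K-1\}$. Because the supports of the $f_j$ are block-ordered and disjoint, the signed values of $g=\sum_{j=0}^{2K-1}\epsilon_j f_j$ along $\supp(g)$ form the concatenation, in index order, of the signed patterns of $\epsilon_0 f_0,\epsilon_1 f_1,\ldots,\epsilon_{2K-1} f_{2K-1}$. A direct count gives
\[\sigma(g)=A+\sum_{j=0}^{2K-2}\mathbf{1}\!\left[\epsilon_j f_j(r_j)\neq\epsilon_{j+1} f_{j+1}(b_{j+1})\right],\]
where $A=\sum_{j=0}^{2K-1}\sigma(f_j)$ is a constant depending only on $\mathbf{F}$, $r_j=\max\supp(f_j)$, and $b_j=\min\supp(f_j)$.

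The key combinatorial observation is that for any fixed value of $\epsilon_j$, exactly one of the two choices $\epsilon_{j+1}\in\{-1,+1\}$ makes the $j$-th boundary indicator equal $0$ and the other makes it equal $1$. Hence, fixing $\epsilon_0=+1$ and choosing $\epsilon_1,\ldots,\epsilon_{2K-1}$ sequentially, one can prescribe the $2K-1$ boundary indicators as any element of $\{0,1\}^{2K-1}$. In particular, $\sigma(g)$ can be made to attain every integer in $\{A,A+1,\ldots,A+2K-1\}$; these $2K$ consecutive integers exhaust all residue classes modulo $K$, so $c$ assumes every value in $\{1,\ldots,K\}$ on $\langle\mathbf{F}\rangle_{\pm 1}$.

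There is no genuine obstacle in the argument. The only step requiring insight is the choice of the invariant (the sign-change count reduced mod $K$); once that is fixed, the proof reduces to the elementary observation that each of the $2K-1$ boundary indicators is an independent binary degree of freedom in the parameter $(\epsilon_0,\ldots,\epsilon_{2K-1})$, which is precisely the sign-flipping freedom built into $\langle\mathbf{F}\rangle_{\pm 1}$.
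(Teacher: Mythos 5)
Your proof is correct and uses essentially the same approach as the paper: both colorings count sign changes modulo $K$ (the paper colors by $|\mathrm{tp}(f)|\bmod K$, and since $|\mathrm{tp}(f)|=\sigma(f)+1$ the two colorings differ only by a cyclic relabeling of the color set). The execution differs mildly: the paper first pairs $f_{2i},f_{2i+1}$ into functions $g_i$ whose two boundary values are normalized to $+1$, then exhibits $K$ explicit witnesses $h_0,\dots,h_{K-1}$ with an alternating-sign structure; you instead argue directly that each of the $2K-1$ boundary indicators is an independent binary degree of freedom in the sign vector (a sequential choice of $\epsilon_{j+1}$ given $\epsilon_j$), so the boundary-indicator sum ranges over all of $\{0,\dots,2K-1\}$. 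Your version is a touch more streamlined --- it would in fact already work with a block sequence of length $K+1$ rather than $2K$ --- but the underlying mechanism is the same.
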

\begin{proof}
  Let $K,n$ be positive integers with $2K\mik n$. We define a coloring $c:X_{\pm 1}(n)\to \{1,....,K\}$ by the rule
  \[c(f)=(|\mathrm{tp}(f)|\mod K)+1.\]
  The color essentially counts the ``jumps''  between the two non-zero values of the function $f$.
  Let $\mathbf{F}=(f_i)_{i=0}^{2K-1}$ be a block sequence in $X_{\pm k}(n)$.
  For every $i=0,...,K-1$, we set
  \[g_i=f_{2i}\big(\min\supp(f_{2i})\big)\cdot f_{2i}+f_{2i+1}\big(\max\supp(f_{2i+1})\big)\cdot f_{2i+1}.\]
  Let us observe that $(g_i)_{i=0}^{K-1}$ forms a block sequence of length $K$ and
  $g_i(\min\supp(g_i))=g_i(\max\supp(g_i))=1$, for all $0\mik i<K$.
  We set
  \[h_0=\sum_{j=0}^{K-1}g_j\;
  \text{and}\; h_i=\sum_{j=0}^{i-1}(-1)^jg_j+(-1)^i\sum_{j=i}^{K-1}g_j\]
  for all $1\mik i<K$. By this choice we have that $h_i\in \langle\mathbf{F}\rangle_{\pm k}$
  and
  \begin{equation}
    \label{eq22}
    |\mathrm{tp}(h_i)|=|\mathrm{tp}(h_0)|+i
  \end{equation}
  for all $0\mik i<K$. By \eqref{eq22} we have that the set $\{h_i:0\mik i<K\}$ itself
  realizes all the colors. The proof is complete.
\end{proof}


\end{document}